\DeclareMathAlphabet{\mathpzc}{OT1}{pzc}{m}{it}
\definecolor{red}{RGB}{255,0,0}
\definecolor{green}{RGB}{0,100,0}
\definecolor{blue}{RGB}{0,0,255}
\crefname{equation}{equation}{equations}
\crefname{figure}{Figure}{Figures}
\theoremstyle{plain}
\newtheorem{thm}{Theorem}[section]
\newtheorem{prop}[thm]{Proposition}
\newtheorem{lemma}[thm]{Lemma}
\newtheorem{cor}[thm]{Corollary} 
\theoremstyle{definition}
\newtheorem{definition}[thm]{Definition}
\newtheorem{example}[thm]{Example}
\theoremstyle{remark}
\newtheorem{remark}[thm]{Remark}
\numberwithin{equation}{section}
\numberwithin{equation}{section}
\renewcommand{\Re}{\mathop{\rm Re}}
\renewcommand{\Im}{\mathop{\rm Im}}
\DeclarePairedDelimiter\floor{\lfloor}{\rfloor}
\newcommand{\N}{\mathbb{N}}
\newcommand{\C}{\mathbb{C}}
\newcommand{\R}{\mathbb{R}}
\newcommand{\Z}{\mathbb{Z}}
\newcommand{\Boh}{\mathcal{O}}
\definecolor{apricot}{rgb}{0.98, 0.81, 0.69}
\definecolor{greenpie}{rgb}{0.69, 0.95, 0.76}
\newcommand*{\defeq}{\mathrel{\vcenter{\baselineskip0.5ex \lineskiplimit0pt
                     \hbox{\scriptsize.}\hbox{\scriptsize.}}}
                     =}
\newcommand*{\eqdef}{=\mathrel{\vcenter{\baselineskip0.5ex \lineskiplimit0pt
                     \hbox{\scriptsize.}\hbox{\scriptsize.}}}
                     }   
\newcommand{\TT}{\mathcal{T}}
\newcommand{\dd}{\mathrm{d}}
\newcommand\restr[2]{{
		\left.\kern-\nulldelimiterspace 
		#1 
		\vphantom{\big|} 
		\right|_{#2} 
}}
\title[Elliptic orthogonal polynomials and OPRL]{Elliptic orthogonal polynomials and OPRL }
\author[V.~Alves]{Victor Alves}
\address[VA]{Instituto de Ciências Matemáticas e de Computação (ICMC), Universidade de São Paulo (USP), São Paulo, Brazil }
\email{victorjulio@usp.br}
\author[A. Mart\'{\i}nez-Finkelshtein]{Andrei Mart\'{\i}nez-Finkelshtein}
\address[AMF]{Department of Mathematics, Baylor University, Waco, TX 76706, USA, and Department of Mathematics, University of Almer\'{\i}a, Almer\'{\i}a, Spain}
\email{A\_Martinez-Finkelshtein@baylor.edu}
\date{\today}
\keywords{ Orthogonal Functions, Orthogonal Polynomials, Elliptic orthogonality}
\subjclass[2020]{Primary:  42C05 - Orthogonal functions and polynomials, general theory of nontrigonometric harmonic analysis; Secondary: 14H52 - Elliptic Curves, 33E05 - Elliptic functions and integrals}
\begin{document}

\begin{abstract}
    
    We explore a class of meromorphic functions on elliptic curves, termed \emph{elliptic orthogonal a-polynomials} ($a$-EOPs), which extend the classical notion of orthogonal polynomials to compact Riemann surfaces of genus one. Building on Bertola's construction of orthogonal sections, we study these functions via non-Hermitian orthogonality on the torus, establish their recurrence properties, and derive an analogue of the Christoffel--Darboux formula. We demonstrate that, under real-valued orthogonality conditions, $a$-EOPs exhibit interlacing and simplicity of zeros similar to orthogonal polynomials on the real line (OPRL). Furthermore, we construct a general correspondence between families of OPRL and elliptic orthogonal functions, including a decomposition into multiple orthogonality relations, and identify new interlacing phenomena induced by rational deformations of the orthogonality weight. 
\end{abstract}

\maketitle

\section{Introduction} \label{sec:intro}

Orthogonal polynomials on the real line (OPRL) play a central role in analysis, mathematical physics, and approximation theory. In recent years, there has been growing interest in generalizing this concept to higher-dimensional and more complex settings, particularly to functions defined on compact Riemann surfaces. Among these, \emph{elliptic orthogonal functions}, defined on tori (genus one Riemann surfaces), provide a rich and tractable class of objects that bridge classical analysis with algebraic geometry.

A foundational approach to such generalizations was developed by Bertola~\cite{Bertola2021a}, who introduced \emph{orthogonal sections}: meromorphic functions on compact Riemann surfaces satisfying orthogonality relations defined via generalized Cauchy kernels and Weyl--Stieltjes transforms. These constructions, although technically involved, are intimately connected with Padé approximation and enable the formulation of Riemann--Hilbert problems suitable for asymptotic analysis~\cite{Bertola2022a}.

In this work, we focus on the simplest nontrivial case: the genus $g=1$ setting of elliptic curves, or equivalently, tori. We define a natural analogue of orthogonal polynomials on these surfaces, which we call \emph{elliptic orthogonal $a$-polynomials} (abbreviated as $a$-EOPs). These are meromorphic functions on the torus with prescribed poles at two points: the ``point at infinity'' and a fixed ``anchor point'' $a$ (which is the parameter $a$ in the acronym $a$-EOPs). They satisfy orthogonality conditions with respect to a complex weight on a closed contour $\Gamma$ on the torus, and form a basis in appropriate function spaces.

   In 1937, J.~Dunham \cite{Dunham37}
   studied orthogonal polynomials on a planar algebraic curve.
   Much more recent is the work of Spicer, Nijhoff, and Kamp  \cite{SpicerNijhoff11}, where they considered the orthogonalization of the sequence
    \[
        1, x, y, x^2, xy, x^3, x^3y, x^4, x^4y, x^5, x^5y, \ldots  \ldots 
    \]
    subject to the constraint that $x$ and $y$ are related by the elliptic equation
    \[
        y^2 = 4 x^3 - g_2 x - g_3;
    \]
    they called the corresponding orthogonal sequence the \textit{two-variable elliptic orthogonal polynomials}.
    
    In another direction, Fasondini, Olver and Xu also developed many bivariate orthogonal polynomials in planar algebraic curves of form $y^m = p(x)$ where $m = 1,2$ and $p$ is a polynomial. and quadratic surfaces of revolution \cite{FasondiniOlverXi2023a, FasondiniOlverXu23b, OlverXu2019, OlverXu2020, OlverXu2021}.

    The most recent and similar construction to ours appeared in Desiraju, Latimer and Roffelsen \cite{Desiraju2024}, where they consider elliptic orthogonal polynomials without the ``anchor point'' $a$ (which forces them to skip the ``polynomials'' of degree $1$). See also \cite{DesirajuLahiry2025} for an investigation on their corresponding recurrence relations and Christoffel-Darboux formula.

Our main contributions are threefold:
\begin{itemize}
    \item We provide a simplified construction of $a$-EOPs, establish their structural properties, and derive recurrence relations and a Christoffel--Darboux formula tailored to the elliptic setting.
    \item We investigate the case of real-valued orthogonality, proving that under natural symmetry and positivity assumptions, the zeros of $a$-EOPs are simple and interlace, thus mirroring classical results from the theory of OPRL.
    \item We develop a lifting mechanism from OPRL to the elliptic setting, and a decomposition framework that expresses $a$-EOPs in terms of multiple orthogonality conditions on the real line. This leads to novel interlacing results for rationally modified OPRL families, including examples involving Jacobi polynomials.
\end{itemize}

The paper is organized as follows. In Section~\ref{sec:torus}, we recall the basic theory of elliptic functions and define the relevant function spaces. Section~\ref{sec:nonH} introduces non-Hermitian orthogonality on the torus and derives foundational results. In Section~\ref{sec:realvalued}, we analyze the real-valued setting, proving zero interlacing theorems. Section~\ref{sec:OPRL-EOP} is devoted to establishing connections between $a$-EOPs and classical OPRL, with explicit examples and a general decomposition theorem.

\section{Polynomials on a torus}\label{sec:torus}

Elliptic functions are doubly-periodic meromorphic functions on $\C$. If we denote\footnote{\, We are following the notation of \cite{NIST:DLMF}.} their periods by $2\omega_1$ and $2\omega_3$, with the standard assumption that they are $\R$-linearly independent, then they can be regarded as meromorphic functions on a torus $\mathcal T$ defined as the quotient space of $\C$ by the integer lattice of periods:
		\begin{equation}\label{TorusRep1}
			\TT  :=  \C / \Lambda, \quad \Lambda  \defeq   2\omega_1\Z +   2\omega_2\Z. 
		  \end{equation}
It is known that $\TT$ can be endowed with a complex structure, becoming a compact Riemann surface of genus 1. We can establish a one-to-one correspondence between $\TT$ and the \textit{fundamental parallelogram} 
\begin{equation}
    \label{parallelogramm}
    \Omega:= 2\omega_1 [0,1) +  2 \omega_3[0,1)
\end{equation}
by identifying the opposite edges. The set $\mathcal L (n\cdot 0 )$ of elliptic functions holomorphic on $\Omega\setminus \{0\}$ with a pole at $0$ of order $\le n\in \Z_{\ge 0}:= \mathbb N \cup \{0\}$ is a vector space. By a well-known property of elliptic functions, the set $\mathcal L (1\cdot 0 ) $ contains only constant functions. 

Since a polynomial of degree $\le n\in \Z_{\ge 0}$ is a meromorphic function on the Riemann sphere $\overline \C:= \C \cup \{\infty\}$ with its only pole of degree at most $n$, at $\infty$, the set $\mathcal L (n\cdot 0 )$ is a natural analogue on $\TT$ of polynomials of degree $\le n$. 

A well-know representative of elliptic functions is the \textit{Weierstrass $\wp$ function}
        \[
            \wp(z) = \frac{1}{z^2} + \sum_{\lambda \in \Lambda \setminus \{0\}} \left( \frac{1}{(z-\lambda)^2} - \frac{1}{\lambda^2}\right),
        \]
 which is even, and its only poles are double poles at $\Lambda$ (so that $\wp\in \mathcal L (2\cdot 0 )$), with
        \begin{equation} \label{asymptWP}
            \wp(z) = \frac{1}{z^2} + \Boh(z^2), \quad z \to 0.
        \end{equation}
The function $\wp$ satisfies a first-order differential equation \cite[\S 23.3]{NIST:DLMF} 
        \begin{equation}\label{eq:wpODE}
            (\wp'(z))^2 = 4(\wp(z) - e_1)(\wp(z) - e_2)(\wp(z)-e_3)=4(\wp(z))^3-g_2 \, \wp(z)-g_3 
        \end{equation}
        with constants
        \begin{equation}
            \label{defE}
                e_1=\wp\left(\omega_1 \right), \quad e_2=\wp\left(\omega_1 + \omega_3 \right), \quad e_3=\wp\left(\omega_3 \right),
        \end{equation}
        and the \textit{Weierstrass invariants}
       \begin{equation}
            \label{defG}
        	g_2  =60 \sum_{\lambda \in \Lambda \setminus \{0\}} \frac{1}{ \lambda^4},\qquad  g_3 =140 \sum_{\lambda \in \Lambda \setminus  \{0\}} \frac{1}{\lambda^6}.
   \end{equation}
Function $\wp$, and hence, the lattice $\Lambda$, do not uniquely determine the half-periods $\omega_1$, $\omega_2$. However, by \eqref{eq:wpODE} and  \eqref{defG}, for each $\wp$, the values $e_1, e_2, e_3$ and the Weierstrass invariants are unique, and we can write $\wp (\cdot; \Lambda)$ or $\wp(\cdot; g_2, g_3)$ when we need to specify the parameters. 

Periods $\omega_1$, $\omega_3$ and constants $e_j$ in \eqref{defE} are related by
\begin{equation} \label{formulaperiods}
\begin{split}
2 \omega_1=\int_{e_1}^{\infty} \frac{d u}{\sqrt{\left(u-e_1\right)\left(u-e_2\right)\left(u-e_3\right)}}=\int_{e_3}^{e_2} \frac{d u}{\sqrt{\left(e_1-u\right)\left(e_2-u\right)\left(u-e_3\right)}}, \\
2 \omega_3=i \int_{e_2}^{e_1} \frac{d u}{\sqrt{\left(e_1-u\right)\left(u-e_2\right)\left(u-e_3\right)}}=i  \int_{-\infty}^{e_3} \frac{d u}{\sqrt{\left(e_1-u\right)\left(e_2-u\right)\left(e_3-u\right)}} ,
\end{split}
\end{equation}
see \cite{NIST:DLMF}.

Equation \eqref{eq:wpODE} allows the identification of $\TT$ with an elliptic (cubic) plane curve 
$\mathcal C$, the compact Riemann surface of
        \[
            y^2 = 4x^3 - g_2\, x - g_3.
        \]
Indeed,   
\begin{equation}\label{eq:WeierstrassParametrization}
    z \quad \mapsto \quad (x,y) = \left(\wp(z), - \frac{\wp'(z)}{2}\right), \quad \wp(z)=\wp(z; g_2, g_3),
\end{equation}
is a biholomorphic map (\textit{Weierstrass' parametrization}) from $\TT$ onto $\mathcal C$. 

Addressing the lack of non-constant functions in $\mathcal L (1\cdot 0 )$, we could use a formal primitive of $\wp$. Recall that the \textit{Weierstrass zeta function} is defined as
        \[
            \zeta(z) \defeq \frac{1}{z} - \int_0^z \left[ \wp(u) - \frac{1}{u^2}\right] \, \dd u=\frac{1}{z}+\sum_{\lambda \in \Lambda \setminus \{0\}}\left[\frac{1}{z-\lambda}+\frac{1}{\lambda}+\frac{z}{\lambda^2}\right] , \quad z \in \C,
        \]
        where the path of integration in $\C$ does not intersect $\Lambda \setminus \{0\}$. It is easy to verify that $\zeta$ is an odd function, with a simple pole at $z=0$, 
        	\[
		\zeta(z)=	 \frac{1}{z} + \Boh \left(z\right), \quad z \to 0,  
		\]
that satisfies that $\zeta' = - \wp$. However, it is no longer doubly periodic (and, thus, not an elliptic function). The following identities are well-known (e.g., \cite[$\mathsection 23.2$]{NIST:DLMF}):
\[
            \zeta(z +   2\omega_i) = \zeta(z) +   2 \eta_i, \quad z \in \C, \quad \text{where} \quad \eta_i = \zeta(\omega_i), \quad i=1,3.
        \]
    Since the additive increments do not depend on $z$, a difference of two zeta functions is elliptic, so that for any $a\in \Omega\setminus \{0\}$,
        \[
            \zeta(z) - \zeta(z-a) + \mathrm{const}
        \]
is an elliptic function with two simple poles in $\Omega$: one at $z=0$ and another one, at $z=a$. 

The previous discussion motivates the following notion:
   \begin{definition} \label{def:21}
 Let  $a \in \Omega \setminus \{0\}$.  For $n\in \Z_{\ge 0}$, $\mathcal L (n\cdot 0 +a)$ denotes the vector space  of elliptic functions with periods \eqref{TorusRep1}, holomorphic in $\Omega\setminus \{0,a\}$, with a possible pole at $0$ of order $\le n$, and at most a simple pole at $a$. Elements of $\mathcal L (n\cdot 0 +a)$ are \emph{elliptic $a$-polynomials} on $\TT$. 
            
Moreover, if for $f\in \mathcal L (n\cdot 0 +a)$, 
		\[
		f(z)=	 \frac{c}{z^n} + \Boh \left(\frac{1}{z^{n-1}}\right), \quad z \to 0, \quad c\in \C\setminus \{0\},
		\]
we say that $n$ is the \emph{polynomial degree} of $f$, and $c$ is its \emph{leading coefficient}. When $c = 1$, $f$ is said to be
  \textit{monic}. 
        \end{definition}
        \begin{remark}
By Definition~\ref{def:21}, the number of poles of $f$ on $\TT$, counted with multiplicity, can exceed at most in 1 its polynomial degree; these values actually match if and only if $f$ is holomorphic at $a$.

Notice also that the limit case $a\to 0$ implies excluding the functions with polynomial degree degree $1$ from our construction, building a theory that resembles that of the exceptional orthogonal polynomials on $\C$. This construction was considered in \cite{DesirajuLahiry2025, Desiraju2024}, where the authors study elliptic orthogonal functions with poles at $0$ only.
        \end{remark}

Clearly, 
\begin{equation} \label{defB1}
b_0(z) \defeq 1, \quad     b_1(z;a) \defeq \zeta(z) - \zeta(z-a) - \zeta(a) = -\frac{1}{2} \frac{\wp'(z) + \wp'(a)}{\wp(z) - \wp(a)},
\end{equation}
    form a monic basis of $\mathcal L (0 +a)$, see \cite{Bertola2021a}; function $b_1$ is an analogue of the Cauchy kernel on $\TT$, see \cite{Desiraju2024}, and the alternative expression for $b_1$ follows from the addition theorem for the $\zeta$ function, \cite[formula (23.10.2)]{NIST:DLMF}. 
We can complement these functions with 
\begin{equation}
    \label{basis1}
    b_{2k}(z) \defeq \wp^k(z), \quad b_{2k+1}(z) \defeq - \frac{1}{2} \wp'(z) \wp^{k-1}(z), \quad k\in \mathbb N,
\end{equation}
so that $b_j \in \mathcal L (j\cdot 0 )$, $ j\ge 2$. Obviously, $\{b_j\}_{j\ge 0}$ forms a basis of $\mathcal L (\infty\cdot 0 +a ):=\bigcup_{j\ge 0} \mathcal L (j\cdot 0 +a )$.

\begin{remark}
Another option is to use the derivatives of the $\wp$ function, and complete $b_0$ and $b_1$ in \eqref{defB1} with  
$
  c_k \wp^{(k-2)}(z)
$, 
where $c_k$ are some normalizing constants. Identities \cite[\S 23.3]{NIST:DLMF}
$$
\wp^{\prime \prime}(z)=6 \wp^2(z)-\frac{1}{2} g_2 , \quad  \wp^{\prime \prime \prime}(z)=12 \wp(z) \wp^{\prime}(z)
$$
show that these two choices are equivalent, and that we can restrict our attention to the set $\{b_j\}_{j\ge 0}$. 
\end{remark}

\section{Non-Hermitian orthogonality on a torus}\label{sec:nonH}

We want to address the notion of orthogonality on $\TT$ with respect to a weight. The ingredients are a closed contour $\Gamma  $ on the torus $\TT$ and a weight function $W$ defined on $\Gamma$. The only assumptions on $\Gamma$ and $W$ so far are that all the integrals below are well-defined and finite; in what follows, we also impose that $a \in \Omega \setminus \{0\}$, $a\notin \Gamma$.

Given $n\in \Z_{\ge 0}$, an elliptic $a$-polynomial $F_n\in \mathcal L (n\cdot 0 +a)$ is \textit{orthogonal} with respect to $W$ (shortly, $a$-EOP)  if
    \begin{equation}
        \label{deforth}
             \int_\Gamma F_n(s) Q(s) W(s) \, \dd s = 0
    \end{equation}
for any $Q \in \mathcal L ((n-1)\cdot 0 +a)$. 
Notice that this is a non-hermitian orthogonality, which implies that the existence of $F_n\not \equiv 0$ is not guaranteed a prior, and that $\Gamma$ in the integral is freely deformable on $\Omega\setminus\{0, a\}$. From the general theory, 
 \begin{equation}\label{eq:DetFormula}
            F_n(z) = 
            \det 
            \begin{pmatrix}
                \mu_{0,0}   &   \mu_{0,1}   &   \cdots& \mu_{0,n-1} &   \mu_{0,n}   \\
                \mu_{1,0}   &   \mu_{1,1}   &   \cdots& \mu_{1,n-1} &   \mu_{1,n}   \\
                \vdots  &   \vdots  &   \ddots  &   \vdots  &   \vdots  \\
                \mu_{n-1,0}   &   \mu_{n-1,1}   &   \cdots& \mu_{n-1,n-1} &   \mu_{n-1,n}   \\
                b_0(z)  &   b_1(z)  &   \cdots  &   b_{n-1}(z)  &   b_n(z)
            \end{pmatrix},
        \end{equation}
where the bi-moments $\mu_{i,j}$ are given by integrals of products of elements of the basis \eqref{defB1}--\eqref{basis1},
        \[
            \mu_{i,j} \defeq \int_\Gamma b_i(s) b_j(s) W(s) \, \dd s.
        \]        
Moreover, the existence of $F_n$ of the polynomial degree exactly $n$ is guaranteed if all minors 
$$
D_k = D_k(a) \defeq \det \left(\mu_{i,j}\right)_{i,j=0}^{k-1}\neq 0, \quad k\in \N. 
$$
An equivalent expression for $D_k$ is Andréief’s integration formula \cite[Eq. (2.1.10)]{Szego},
\begin{equation}
    \label{andreief}
     D_k = \frac{1}{k!} \idotsint_{\Gamma^k} \left( \det \left[ b_{j-1}(x_i)\right]_{i,j=1}^{k} \right)^2 \,  \prod_{i=1}^k W(x_i) \, d x_1\, \cdots \, d x_k.
\end{equation}
In the case when all $D_k\neq 0$, we can introduce also the ``orthonormal'' $a$-polynomial 
            \begin{equation}
                \label{orthbasis}
             f_n(z):= \frac{1}{\sqrt{D_{n-1} D_n}} \, F_n(z)\in \mathcal L (n\cdot 0 +a),
\end{equation}
            where we choose any branch of the square root. 

An analogue of the well-known three-term recurrence relation satisfied for non-Hermitian orthonormal polynomials on the complex plane is the following result: 
\begin{lemma}\label{lem:5TRR}
    Under the assumption that $D_n\neq 0$ for all $n\in \Z_{\ge 0}$, for each $k \geq 0$, there exist constants $A_k, B_k, C_k \in \C$ such that
            \begin{equation}\label{eq:FiveTermRecurrence}
                     \wp(z)   f_k(z) = A_k  f_{k+2}(z) + B_k  f_{k+1}(z) + C_k  f_k(z) + B_{k-1} 
                      f_{k-1}(z) + A_{k-2}   f_{k-2}(z),
                \end{equation}
                where $f_{-1} \equiv f_{-2} \equiv 0$. Moreover,
                 \begin{equation} \label{formualcoefficients}
                     \begin{split}
                    A_k &= \int_\Gamma \wp(z)  f_k(z)  f_{k+2}(z) W(z)\, d z, \\ 
                    B_k &= \int_\Gamma \wp(z)  f_k(z)  f_{k+1}(z) W(z)\, d z, \\
                    C_k &= 
                    \int_\Gamma \wp(z)  f_k(z)^2 W(z)\, d z.
                    \end{split}
                \end{equation}
            \end{lemma}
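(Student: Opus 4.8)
The plan is to obtain \eqref{eq:FiveTermRecurrence} by multiplying $f_k$ by $\wp$, observing that the product lands in a finite-dimensional space spanned by the $f_j$, and then reading off the expansion coefficients through the bilinear form $\langle f,g\rangle \defeq \int_\Gamma f(s)g(s)W(s)\,\dd s$. First I would establish the membership $\wp f_k \in \mathcal L((k+2)\cdot 0 + a)$ by a pole count: since $D_n\neq 0$, the function $f_k$ has polynomial degree exactly $k$, hence a pole of order $k$ at $0$ and at most a simple pole at $a$; meanwhile $\wp \in \mathcal L(2\cdot 0)$ has a double pole at $0$ and is holomorphic at $a$ (as $a\notin\Lambda$). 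Consequently $\wp f_k$ has a pole of order $k+2$ at $0$ and at most a simple pole at $a$. Because $\dim \mathcal L(m\cdot 0 + a) = m+1$ (the $b_0,\dots,b_m$, and equivalently the $f_0,\dots,f_m$, form a basis), and each $f_j$ has polynomial degree exactly $j$, the family $\{f_0,\dots,f_{k+2}\}$ is a triangular basis of $\mathcal L((k+2)\cdot 0 + a)$, so there exist scalars $\lambda_{k,j}$ with
\[
\wp(z)\,f_k(z) = \sum_{j=0}^{k+2} \lambda_{k,j}\, f_j(z).
\]

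Next, pairing this identity against $f_m W$ and integrating over $\Gamma$, the orthonormality $\langle f_j, f_m\rangle = \delta_{j,m}$ yields the closed formula $\lambda_{k,m} = \langle \wp f_k, f_m\rangle$ for every $0\le m\le k+2$. The decisive reduction is the vanishing $\lambda_{k,j}=0$ for $j\le k-3$: for such $j$ one has $\wp f_j \in \mathcal L((j+2)\cdot 0 + a) \subseteq \mathcal L((k-1)\cdot 0 + a)$, and since $f_k$ is orthogonal to the whole of $\mathcal L((k-1)\cdot 0 + a)$ by the defining relation \eqref{deforth}, the pairing $\langle \wp f_j, f_k\rangle$ vanishes. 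This collapses the sum to the five indices $j\in\{k-2,k-1,k,k+1,k+2\}$.

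Finally, I would identify the surviving coefficients using the symmetry $\langle f,g\rangle = \langle g,f\rangle$ of this (non-Hermitian) bilinear form. The top three read off directly as $\lambda_{k,k+2}=A_k$, $\lambda_{k,k+1}=B_k$, $\lambda_{k,k}=C_k$, matching \eqref{formualcoefficients}. For the two lower terms, symmetry gives $\lambda_{k,k-1} = \langle \wp f_{k-1}, f_k\rangle = B_{k-1}$ and $\lambda_{k,k-2} = \langle \wp f_{k-2}, f_k\rangle = A_{k-2}$, which is exactly \eqref{eq:FiveTermRecurrence}; the conventions $f_{-1}\equiv f_{-2}\equiv 0$ then absorb the boundary cases $k=0,1$ automatically.

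The main obstacle is conceptual rather than computational: orthogonality is taken against a \emph{symmetric} bilinear form built from a complex weight on $\Gamma$, not a Hermitian inner product, so the symmetric labelling of the coefficients ($B_{k-1}$ and $A_{k-2}$ rather than independent constants) rests entirely on $\langle f,g\rangle=\langle g,f\rangle$ together with the contour-deformation invariance noted after \eqref{deforth}. A secondary point requiring care is confirming, under the hypothesis $D_n\neq 0$ for all $n$, that each $f_j$ genuinely attains polynomial degree $j$; this is what simultaneously guarantees the triangular basis property and ensures that $\wp f_k$ does not drop order at $0$, so that the expansion above is legitimate.
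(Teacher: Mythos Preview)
Your proof is correct and follows essentially the same argument as the paper: expand $\wp f_k$ in the triangular basis $\{f_0,\dots,f_{k+2}\}$ of $\mathcal L((k+2)\cdot 0 + a)$, use orthonormality to identify the coefficients as $\int_\Gamma \wp f_k f_j W\,\dd z$, kill the terms with $j\le k-3$ via $\wp f_j\in\mathcal L((k-1)\cdot 0+a)$, and invoke the symmetry $\lambda_{k,j}=\lambda_{j,k}$ to name the lower two coefficients. Your additional remarks on the pole count, the role of $D_n\neq 0$, and the bilinear (rather than Hermitian) nature of the pairing are all to the point.
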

            \begin{proof}
                Since $\wp   f_k \in \mathcal L ((k+2)\cdot 0 +a) $ and \eqref{defB1}--\eqref{basis1} is a basis of $ \mathcal L (\infty \cdot 0 +a)$, we can write
                \[
                    \wp(z)   f_k(z) = \sum_{j=0}^{k+2} \lambda_{j,k}  f_j(z) , \quad 
                    \lambda_{j,k} = \int_\Gamma \wp(z)  f_k(z)  f_j(z) W(z)\, d z, \quad j=0,1, \dots, k+2.
                \]
                For $j < k-2$, $\wp   f_j\in \mathcal L ((k-1)\cdot 0 +a)$, and by orthogonality of $  f_j$ to $  \mathcal L ((k-1)\cdot 0 +a)$, $\lambda_{j,k} = 0$. Clearly, $\lambda_{j,k} = \lambda_{k,j}$ for $k-2 \leq j \leq k+2$ and  \eqref{eq:FiveTermRecurrence} follows.
            \end{proof}

            The function 
            \[
                K_n(z,u) := \sum_{j=0}^{n-1}  f_j(z)  f_j(u)
            \]
            is the reproducing kernel for $  \mathcal L ((n-1)\cdot 0 +a)$. Indeed, for $Q  = \sum_{i=0}^{n-1} \lambda_i  f_i \in \mathcal L ((n-1)\cdot 0 +a)$, 
            \[
                \int_\Gamma Q(u) K_n(z,u) W(u)du = \sum_{i,j=0}^{n-1} \lambda_i  f_j(z) \int_\Gamma  f_i(u)   f_j(u) W(u)d u 
                = \sum_{i=0}^{n-1} \lambda_i   f_i(z) = Q(z). 
            \]
As in the classical case, the existence of a finite recurrence relation implies the existence of a simplified expression for the kernel (the \textit{Christoffel-Darboux formula}\footnote{\, See the recent work \cite{DesirajuLahiry2025} containing a similar result.}):
             \begin{lemma}
If $z^2\neq u^2$, the reproducing kernel satisfies the identity
        \begin{equation}\label{eq:CDFormula}
        \begin{split}
            K_n(z,u) &= A_{n-1} \frac{ f_{n+1}(z)f_{n-1}(u) - f_{n+1}(u) f_{n-1}(z) }{\wp(z) - \wp(u)} \\
            &\phantom{=} + A_{n-2} \frac{ f_{n}(z) f_{n-2}(u) - f_{n}(u) f_{n-2}(z)}{\wp(z) - \wp(u)}    + B_{n-1} \frac{ f_{n}(z) f_{n-1}(u) - f_{n}(u) f_{n-1}(z)}{\wp(z) - \wp(u)},
        \end{split}
        \end{equation}
        with $A_k$'s and $B_k$'s given in \eqref{formualcoefficients}.
        
If $z \in \Omega\setminus\{ \omega_1, \omega_2, \omega_3\}$, 
        \begin{equation}
        \label{confluent}
        \begin{split}
            K_n(z,\pm z) &= A_{n-1} \frac{ f_{n+1}'(z)f_{n-1}(z) - f_{n+1}(z) f_{n-1}'(z) }{\wp'(z) } \\
            &\phantom{=} + A_{n-2} \frac{ f_{n}'(z) f_{n-2}(z) - f_{n}(z) f_{n-2}'(z)}{\wp'(z) }   + B_{n-1} \frac{ f_{n}'(z) f_{n-1}(z) - f_{n}(z) f_{n-1}'(z)}{\wp'(z)}.
        \end{split}
        \end{equation}
  Finally, if $z \in \{ \omega_1, \omega_2, \omega_3\}$,
        \begin{equation}
        \label{confluent2}
        \begin{split}
            K_n(z,\pm z) &= A_{n-1} \frac{ f_{n+1}''(z)f_{n-1}(z) - f_{n+1}(z) f_{n-1}''(z) }{\wp''(z) } \\
            &\phantom{=} + A_{n-2} \frac{ f_{n}''(z) f_{n-2}(z) - f_{n}(z) f_{n-2}''(z)}{\wp''(z) }   + B_{n-1} \frac{ f_{n}''(z) f_{n-1}(z) - f_{n}(z) f_{n-1}''(z)}{\wp''(z)}.
        \end{split}
        \end{equation}
    \end{lemma}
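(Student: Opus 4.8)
The plan is to reproduce the classical Christoffel--Darboux argument, but driven by the five-term recurrence \eqref{eq:FiveTermRecurrence} rather than a three-term one. First I would write \eqref{eq:FiveTermRecurrence} at $z$, multiply by $f_k(u)$, write the same relation at $u$, multiply by $f_k(z)$, and subtract. The symmetric term $C_k f_k(z) f_k(u)$ cancels, and introducing the antisymmetric combinations $\Delta_{i,j} := f_i(z) f_j(u) - f_i(u) f_j(z)$ (so that $\Delta_{i,j} = -\Delta_{j,i}$) I obtain
\[
(\wp(z) - \wp(u)) f_k(z) f_k(u) = A_k \Delta_{k+2,k} + B_k \Delta_{k+1,k} + B_{k-1} \Delta_{k-1,k} + A_{k-2} \Delta_{k-2,k}.
\]

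Next I would sum over $k = 0, \dots, n-1$ and telescope. Using $\Delta_{k-1,k} = -\Delta_{k,k-1}$ and reindexing, the $B_{k-1}$-sum becomes the negative of the $B_k$-sum shifted by one, so that the combined $B$-contribution collapses to the single boundary term $B_{n-1}\Delta_{n,n-1}$; the convention $f_{-1} \equiv f_{-2} \equiv 0$ ensures the spurious low-index terms vanish. Likewise $\Delta_{k-2,k} = -\Delta_{k,k-2}$ makes the $A_{k-2}$-sum the negative of the $A_k$-sum shifted by two, leaving the boundary terms $A_{n-1}\Delta_{n+1,n-1} + A_{n-2}\Delta_{n,n-2}$. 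Collecting these yields
\[
(\wp(z) - \wp(u)) K_n(z,u) = A_{n-1}\Delta_{n+1,n-1} + A_{n-2}\Delta_{n,n-2} + B_{n-1}\Delta_{n,n-1},
\]
which is \eqref{eq:CDFormula} after dividing by $\wp(z) - \wp(u)$, legitimate precisely when $\wp(z) \neq \wp(u)$, i.e. when $z^2 \neq u^2$.

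For the confluent identities I would read the last display as an identity between meromorphic functions of $u$. Since $K_n(z, \cdot)$ is holomorphic at $u = \pm z$, the apparent singularities of the right-hand side at the zeros $u = \pm z$ of $\wp(z) - \wp(u)$ must be removable, and I evaluate $K_n(z, \pm z)$ by l'H\^opital's rule. At a generic $z$, where $\wp'(z) \neq 0$, the denominator has a simple zero, so one differentiation in $u$ suffices: each difference $\Delta_{i,j}$ is replaced by the Wronskian $f_i' f_j - f_i f_j'$ (the sign supplied by the derivative $-\wp'(z)$ of the denominator), producing \eqref{confluent}. At a half-period $z \in \{\omega_1, \omega_2, \omega_3\}$ one has $\wp'(z) = 0$ while $\wp''(z) \neq 0$, so the denominator has a double zero; regularity of the left-hand side forces the numerator to vanish to the same order, and two differentiations give \eqref{confluent2}, with $\wp''$ replacing $\wp'$ and second derivatives of the $f_j$.

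The main obstacle is the bookkeeping in the telescoping step. Unlike the classical case with two shifted sums, here four index-shifted sums appear, and one must pair the two $A$-sums against each other and the two $B$-sums against each other via the antisymmetry $\Delta_{i,j} = -\Delta_{j,i}$, verifying that every interior contribution cancels so that exactly the three claimed boundary terms at levels $n-2, n-1, n$ survive.
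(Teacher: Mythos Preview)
Your proposal is correct and follows essentially the same route as the paper's own proof: subtract two instances of the five-term recurrence, package the right-hand side in the antisymmetric quantities $\Delta_{i,j}$ (the paper writes $G_{j,k}(z,u)=f_{k+j}(z)f_k(u)-f_{k+j}(u)f_k(z)$, i.e.\ your $\Delta_{k+j,k}$), telescope the sum over $k$, and then pass to the limit $u\to\pm z$ for the confluent identities. Your account of the telescoping and of the l'H\^opital step is actually more explicit than the paper's, which simply states the summed identity and says the confluent formulas follow by taking limits.
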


    \begin{proof}
        Let $G_{j,k}(z,u) \defeq f_{k+j}(z) f_k(u) - f_{k+j}(u) f_k(z)$. We multiply equation \eqref{eq:FiveTermRecurrence} by $f_k(u)$ and subtract the corresponding identity with the roles of $z$ and $u$ exchanged to obtain
        \[
            [ \wp(z) - \wp(u)] f_k(z) f_k(u) 
            = A_k G_{2,k}(z,u) + B_k G_{1,k}(z,u) - B_{k-1} G_{1,k-1}(z,u) - A_{k-2} G_{2,k-2}(z,u).
        \]
      A summation of the identities above from $k=0$ to $n-1$ gives us
        \[
            K_n(z,u) = \frac{B_{n-1} G_{1,n-1}(z,u) + A_{n-1} G_{2,n-1}(z,u) + A_{n-2} F_{2,n-2}(z,u)}{\wp(z) - \wp(u)},
        \]
       equivalent to \eqref{eq:CDFormula}. Equations \eqref{confluent} and \eqref{confluent2} are obtained from \eqref{eq:CDFormula} taking limits as $u\to \pm z$.
    \end{proof}

\section{Real-valued orthogonality on a torus}\label{sec:realvalued}

 The non-Hermitian orthogonality on the complex plane reduces to the Hermitian one when the path of integration belongs to $\R$ and the weight function is real-valued and positive. In this case, the behavior of zeros of the orthogonal polynomials is well known. 

 In analogy with these considerations, we can assume that $\Gamma \subset \TT$ is a contour on which both the elements of the basis $\{b_j\}$ and the weight $W$ are real-valued, and $W>0$.  
 Andréief’s integration formula \eqref{andreief} shows that $D_n > 0$ for all $n\in \Z_{\ge 0}$, which, as we have seen, implies the existence of the orthonormal basis $\{f_n\}$ \eqref{orthbasis}, $f_n\in \mathcal L (n\cdot 0 +a)$. 

By \eqref{defB1}--\eqref{basis1}, for the real-valuedness of the basis $\{b_j\}$ it is sufficient to assume that $\wp(a)\in \R$ and that $\wp$ and $\wp'$ are real-valued on $\Gamma$. By \eqref{eq:DetFormula}, in this case $f_j$ are real-valued on $\Gamma$. 

A natural choice of $\Gamma$ is the analogue of the real line on $\TT$, namely, the interval $[0, 2\omega_1)\in \Omega$. Since the Weierstrass functions take real values on the real axis if and only if the invariants $g_2, g_3 \in \R$, it is natural to impose this condition. As a consequence of \eqref{defE}, $e_1\in \R$. For simplicity, we consider the case when all three values $e_i$ are real and distinct. This is the case of the discriminant
$$
\Delta:=g_2^3-27 g_3^2=16\left(e_2-e_3\right)^2\left(e_3-e_1\right)^2\left(e_1-e_2\right)^2>0,
$$
and 
\begin{equation}
    \label{positionE}
   e_3<e_2<e_1,   \quad  e_3<0<e_1,
\end{equation}
see \cite[\S 23.9]{NIST:DLMF}. By \eqref{formulaperiods}, $\omega_1, \tau:=\omega_3/i>0$, and $\Omega$ is a rectangle. Properties of the the Weierstrass functions show that both $\wp$ and $\wp'$ are real-valued on $\Gamma$ when $\Gamma=\gamma_i$, $i=1,2$, with 
\begin{equation}
    \label{curvesDef}
    \gamma_1 :=  [0,2 \omega_1), \quad \gamma_2 := \omega_3+\gamma_1=i \tau+\gamma_1=\left[ i \tau, i\tau  + 2 \omega_1 \right) ,
\end{equation}
defined on $\Omega$ and lifted to $\TT$.  These will be the two contours of orthogonality (as well as equivalent contours freely deformable on $\Omega\setminus \{0,a\}$ into $\gamma_1$ or $\gamma_2$) that we consider henceforth, see Figure~\ref{fig:gammas}.
Additionally, we assume that $a\in (\gamma_1\cup \gamma_2)\setminus \Gamma$, and that the leading coefficient of each $f_n$ in \eqref{orthbasis} is strictly positive. Notice that with these settings, all $b_j$ are real-valued on $\gamma_1 \cup \gamma_2$.

\begin{figure}[htb]
    \centering
\begin{tikzpicture}
    \coordinate (A) at (0,0);
    \coordinate (B) at (4,0);
    \coordinate (C) at (4,3);
    \coordinate (D) at (0,3);
    \draw[line width=2pt] (A) -- (B);
    \draw[dashed] (B) -- (C);
    \draw[dashed] (C) -- (D);
    \draw (D) -- (A);
    \coordinate (MidAD) at ($ (D)!0.5!(A) $); 
    \coordinate (MidBC) at ($ (C)!0.5!(B) $); 
    \draw[line width=2pt] (MidAD) -- (MidBC);
    \fill[red] (A) circle (3pt);
    \coordinate (BaseMid) at ($ (A)!0.5!(B) $);
    \node[above=1mm] at (BaseMid) {$\gamma_1$};
    \coordinate (RedLineMid) at ($ (MidAD)!0.5!(MidBC) $);
    \node[above=1mm] at (RedLineMid) {$\gamma_2$};
 \node[left=1mm] at (A) {$0$};
  \node[right=1mm] at (B) {$2\omega_1$};
  \node[left=1mm] at (MidAD) {$\omega_3$};
  \node[right=1mm] at (MidBC) {$\omega_3+2\omega_1$};
\end{tikzpicture}
\caption{Contours $\gamma_1$ and $\gamma_2$ on $\Omega$.}
    \label{fig:gammas}
\end{figure}
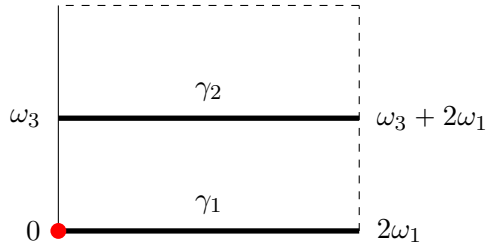

One of the main features of orthogonal polynomials on the real line (OPRL) is the property of their zeros: they are real, simple, and interlace with polynomials of the lower degree. In the rest of this section, we discuss the corresponding analogues. When discussing the interlacing of zeros of elliptic functions (i.e., meromorphic functions on $\TT$), we restrict our attention to the zeros in the fundamental parallelogram $\Omega$. This convention also applies to statements about the order of the zeros on a contour of $\TT$.

\begin{definition}[Interlacing] \label{def:interlacing}
Let $F, G$ be functions defined on the curve $\gamma \subset \C$, and let $z_1, \dots, z_n$ and $w_1, \dots, w_m$ be the zeros of $F$ and $G$ in $\gamma$, respectively, with account of their multiplicity. 
We say that $G$ \textbf{interlaces $F$ in $\gamma$} and denote it $F \preccurlyeq G$ in $\gamma$, if 
\begin{equation} \label{interlacing1}
    m=n \quad \text{and} \quad \Re z_1 \leq \Re w_1 \leq \Re z_2 \leq \Re w_2 \leq \cdots \leq  \Re z_n \leq \Re w_n,
\end{equation}
or if
\begin{equation} \label{interlacing2}
    m=n-1 \quad \text{and} \quad  \Re z_1 \leq \Re w_1 \leq \Re z_2 \leq \cdots \leq \Re z_{n-1} \leq \Re w_{n-1} \leq \Re z_n.
\end{equation}
Furthermore, we use the notation $F \prec$ in $\gamma$ when all inequalities in \eqref{interlacing1} or \eqref{interlacing2} are strict. 
\end{definition}

The following theorem states that the zeros of the $a$-EOPs mirror the properties of the zeros of OPRL.      
\begin{thm}\label{theo:zerosAndInterlacing}
    Let $W $ be a weight function on $ \Gamma \in \{\gamma_1, \gamma_2\}$, $a \in  (\gamma_1\cup  \gamma_2 ) \setminus  \Gamma  $, and $(f_n)$ be the sequence of elliptic orthogonal $a$-polynomials associated to $W$.
    \begin{enumerate}[(i)]
        \item For $\Gamma = \gamma_1$, $f_n$ has exactly $n$ simple zeros on $\gamma_1$. Additionally, it has a simple zero at $\gamma_2$ if, and only if, it has a pole at $a$. Moreover, $f_{n+1} \prec f_n$ in $\gamma_1$.
        \item For $\Gamma = \gamma_2$: 
            \begin{itemize}
                \item if $n$ is even, $f_n$ has exactly $n$ simple zeros on $\gamma_2$. Additionally, it has a (simple) zero on $\gamma_1$ if, and only if, it has a pole at $a$;
                \item if $n$ is odd, $f_n$ has exactly $n+1$ simple zeros on $\gamma_2$, and $f_n \prec  
 f_{n+1}$ in $\gamma_2$ or $f_{n+1} \prec  f_n$ in $\gamma_2$.
            \end{itemize}
        \end{enumerate}
       In particular, the zeros of $f_n$ are analytic functions of the parameter $a$.
    \end{thm}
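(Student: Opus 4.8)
The final assertion follows from the simplicity of the zeros established in (i)--(ii) via the holomorphic Implicit Function Theorem. The strategy rests on the observation that the entire $a$-dependence of the construction is funneled through the single basis element $b_1$ in \eqref{defB1}, since $b_0\equiv 1$ and the $b_j$ with $j\ge 2$ in \eqref{basis1} are independent of $a$. For fixed $s$, the map $a\mapsto b_1(s;a)=\zeta(s)-\zeta(s-a)-\zeta(a)$ is meromorphic on $\Omega$ with poles only at $a\equiv 0$ and $a\equiv s$. Since the admissible $a$ lies on the contour complementary to $\Gamma$ and $a\ne 0$, a small complex neighborhood $K$ of any such $a$ is disjoint from $\Gamma\cup\{0\}$; hence $b_1(s;a)$ and $\partial_a b_1(s;a)$ are continuous, and therefore bounded, on the compact set $\Gamma\times K$. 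Differentiation under the integral sign is thereby justified, and each bi-moment $\mu_{i,j}(a)=\int_\Gamma b_i(s)b_j(s)W(s)\,ds$ is holomorphic in $a$ near every admissible value.

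Granting this, the determinant $F_n(z;a)$ in \eqref{eq:DetFormula} is a polynomial in the $\mu_{i,j}(a)$ and the $b_j(z;a)$, hence jointly holomorphic in $(z,a)$ away from the poles in $z$; the same holds for $D_k(a)$. In the real-valued regime Andréief's formula \eqref{andreief} gives $D_k(a_0)>0$ at every real admissible $a_0$, so $D_k(a)\ne 0$ throughout a complex neighborhood, and the normalized $f_n(z;a)=F_n(z;a)/\sqrt{D_{n-1}(a)D_n(a)}$ from \eqref{orthbasis} is likewise jointly holomorphic near any point $(z_0,a_0)$ with $z_0$ not a pole, once a branch of the square root is fixed. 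As $f_n$ and $F_n$ share their zeros in $z$, it suffices to track those of $F_n$.

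Fixing an admissible $a_0$ and letting $z_0$ be any zero of $f_n(\cdot;a_0)$ on the relevant contour, parts (i)--(ii) guarantee that $z_0$ is \emph{simple}, i.e. $\partial_z F_n(z_0;a_0)\ne 0$. Applying the holomorphic Implicit Function Theorem to $\Phi(z,a):=F_n(z;a)$, with $\Phi(z_0,a_0)=0$ and $\partial_z\Phi(z_0,a_0)\ne 0$, produces a unique holomorphic solution $a\mapsto z(a)$ near $a_0$ satisfying $z(a_0)=z_0$ and $\Phi(z(a),a)\equiv 0$; this is the desired local analytic dependence. Because $f_n(\cdot;a)$ is real-valued on the contour for real $a$, the real form of the same argument shows $z(a)$ stays on the contour for real $a$, so each zero is in particular a real-analytic function of the real parameter $a$.

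The main obstacle is to upgrade these local branches to globally single-valued analytic functions over the admissible range of $a$, i.e. to exclude branch points. A branch point could occur only where two of the analytic solutions coalesce into a multiple zero of $F_n(\cdot;a)$---precisely what the simplicity in (i)--(ii) forbids at \emph{every} admissible $a$. Moreover, (i)--(ii) fix the total number of contour zeros of $f_n$ (namely $n$ or $n+1$) independently of $a$, so that no zero can appear or disappear as $a$ moves. The local branches therefore fit together without ramification, and along each connected component of the parameter domain the zeros of $f_n$ are single-valued analytic functions of $a$.
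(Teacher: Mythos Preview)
Your proposal addresses only the trailing ``In particular'' clause and treats parts (i) and (ii) as already established. But (i) and (ii) \emph{are} the theorem: the claims about the exact number of zeros on $\Gamma$, their simplicity, the possible extra zero on the other contour, and the interlacing $f_{n+1}\prec f_n$ are all assertions to be proved, not hypotheses. Your Implicit Function Theorem argument for analytic dependence on $a$ is fine once simplicity is in hand, but you have not supplied any argument for simplicity (or for the zero count, or for interlacing). So as a proof of the theorem, this is essentially empty.

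For comparison, the paper's proof is substantial and occupies the remainder of the section. The zero count and simplicity are obtained by combining the elliptic identity $\sum_{\text{zeros}} z \equiv \sum_{\text{poles}} p \pmod{\Lambda}$ with a sign-change argument: if $f_n$ had too few odd-order zeros $z_1,\dots,z_m$ on $\Gamma$, Abel's theorem produces an auxiliary $\Phi\in\mathcal L(m\cdot 0+a)$, real on $\Gamma$, whose only sign changes there are at the $z_k$; then $\int_\Gamma \Phi f_n W\neq 0$ contradicts orthogonality. A parity lemma (using the imaginary part of the same identity) controls how many zeros can sit on the complementary contour. Interlacing is then proved by showing that every nontrivial real combination $Af_n+Bf_{n+1}$ has only simple zeros on $\Gamma$ (same Abel-theorem construction applied to $G=Af_n+Bf_{n+1}$), which forces the Wronskian $f_{n+1}f_n'-f_nf_{n+1}'$ to be nonvanishing on $\Gamma$; a standard sign argument between consecutive zeros of $f_n$ finishes it, and in case (i) the behavior near $z=0$ pins down the direction $f_{n+1}\prec f_n$. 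None of this machinery appears in your proposal.
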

    \begin{remark}
        Theorem \ref{theo:zerosAndInterlacing} is an extension of \cite[Theorem 2.3]{Bertola2022a}, where only the case $\Gamma = \gamma_2$ was considered. However, the interlacing property in this case is new.
    \end{remark}

The rest of this section is devoted to the proof of Theorem~\ref{theo:zerosAndInterlacing}. We start by studying the amount of zeros of $f_n$ on the support of orthogonality. 

    The next lemma and proposition are a simple generalization of \cite[Theorem 2.3]{Bertola2022a}:
				\begin{lemma}\label{lem:EvenAmountOfzeros}
                    Let $\Phi \in \mathcal L  (n\cdot 0 +a)$ for some $n\in \N$ be such that $\Phi(x)\in \R$ for $x\in \gamma_1\setminus\{a\}$. Then, the amount of zeros of $\Phi$ on $\gamma_2$, with account of multiplicity, is
                    \begin{enumerate}[(i)]
                        \item even, if $a\in \gamma_1$ or if $a\in \gamma_2$ but $\Phi$ is analytic at $z=a$;
                        \item odd, if $a\in \gamma_2$ and $\Phi$ has a pole at $z=a$.
                    \end{enumerate}
				\end{lemma}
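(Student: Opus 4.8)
The plan is to reduce the statement to a parity count for a \emph{real-valued} function on the closed cycle $\gamma_2$, with the single possible sign change responsible for case (ii) coming precisely from a pole of $\Phi$ at $a$. The argument uses only the reality/symmetry structure, not orthogonality.

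First I would upgrade the reality hypothesis from $\gamma_1$ to $\gamma_2$. Since $g_2,g_3\in\R$, the lattice $\Lambda=2\omega_1\Z+2\omega_3\Z$ (with $\omega_3=i\tau$) is invariant under complex conjugation, so $z\mapsto\bar z$ descends to an anti-holomorphic involution of $\TT$. As $\Phi$ is elliptic, hence meromorphic in a full neighborhood of $\gamma_1$ in $\C$, and real on the real-analytic arc $\gamma_1$ away from its poles, the Schwarz reflection principle shows that $\Phi^{*}(z):=\overline{\Phi(\bar z)}$ agrees with $\Phi$ on an arc and therefore identically: $\overline{\Phi(\bar z)}=\Phi(z)$ for all $z$. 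Now for $z\in\gamma_2$, writing $z=i\tau+t$ with $t\in[0,2\omega_1)$ gives $\bar z=z-2\omega_3\equiv z\pmod\Lambda$, so $\Phi(\bar z)=\Phi(z)$ by periodicity, whence $\Phi(z)=\overline{\Phi(z)}\in\R$. Thus $\gamma_2$ is the second real cycle of the real elliptic curve, and $\Phi$ takes real values on it.

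Next I would count zeros through their effect on the sign of $\Phi$ along $\gamma_2$. As a nonzero elliptic function, $\Phi$ has finitely many zeros and poles in $\Omega$, so on $\gamma_2$ it is a real meromorphic function with finitely many zeros and at most one pole (at $a$, if $a\in\gamma_2$). Traversing $\gamma_2$ once returns to the same point of $\TT$, hence to the same value of $\Phi$ (finite and nonzero if the base point is chosen away from zeros and the pole); therefore the total number of sign changes of $\Phi$ around the loop is \emph{even}. A sign change occurs exactly at a zero of odd multiplicity or at a simple pole, whereas zeros of even multiplicity produce none. Writing $Z$ for the number of zeros counted with multiplicity and $P\in\{0,1\}$ for the number of poles on $\gamma_2$, this yields $Z\equiv(\text{number of odd-order zeros})\equiv P\pmod 2$.

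The two cases follow at once: in case (i) either $a\in\gamma_1$, or $a\in\gamma_2$ with $\Phi$ analytic there, so $P=0$ and $Z$ is even; in case (ii) $a\in\gamma_2$ with a simple pole, so $P=1$ and $Z$ is odd. The step needing the most care is the sign-change bookkeeping: I must justify that the parity of $Z$ equals the parity of the number of odd-order zeros (so even-order tangencies are invisible to the count), that a \emph{simple} pole genuinely flips the sign, and that the base point of the loop can be chosen to avoid all zeros and the pole, so that ``same value after one loop'' is literally the same nonzero real number. Finiteness of the zero/pole sets on $\gamma_2$—guaranteed because $\Phi\not\equiv0$ cannot vanish on a subarc of $\gamma_2$ by the identity principle—makes all these manipulations legitimate.
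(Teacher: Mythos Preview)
Your proof is correct but takes a genuinely different route from the paper. The paper invokes the Abel-type identity $\sum_{\text{poles}} p \equiv \sum_{\text{zeros}} z \pmod{\Lambda}$ for an elliptic function, takes imaginary parts, and then uses the conjugation symmetry $\Phi(z)=\overline{\Phi(\bar z)}$ to pair off zeros lying outside $\gamma_1\cup\gamma_2$, so that only zeros on $\gamma_2$ contribute (each by $\tau$) to the imaginary-part sum; the parity then follows by comparing with the pole contribution. You instead first deduce that $\Phi$ is real on $\gamma_2$ via Schwarz reflection and lattice periodicity, and then perform an elementary sign-change count around the closed loop, using that a real meromorphic function returns to the same nonzero value after one period, so the total number of sign flips---one per odd-order zero and one for the simple pole at $a$---must be even. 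Your approach is more self-contained and avoids the Abel identity altogether; the paper's approach has the small advantage that the identity \eqref{eq:EllipticFunctionsProperty} is reused later (in Proposition~\ref{prop:Interlacing}) to locate extra zeros, so establishing it here is not wasted. The care points you flag (choosing a base point off zeros/poles, even-order zeros being invisible, the simple pole genuinely flipping sign) are exactly the ones needed, and your justification is adequate.
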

				\begin{proof}
    Recall that for any elliptic function $\Phi$ with periods $2\omega_1>0$ and $2\omega_3=2i\tau$, $\tau>0$, we have that
    \begin{equation}\label{eq:EllipticFunctionsProperty}
                             \sum_{\substack{\Phi(p) = \infty \\ p \in \Omega}} p = \sum_{\substack{\Phi(z) = 0 \\ z \in \Omega}} z  , \mod \Lambda
                        \end{equation}
                        (meaning that the difference of the left and right-hand sides belongs to the lattice $\Lambda$), where all zeros and poles are enumerated with account of multiplicity. Taking the imaginary part, this identity is reduced to
                        \begin{equation}\label{eq:EvenzerosLemma}
                         \sum_{\substack{\Phi(p) = \infty \\ p \in \Omega}} \Im (p) = \sum_{\substack{\Phi(z) = 0 \\ z \in \Omega}} \Im(z)  , \mod 2\tau.
                        \end{equation}
If $\Phi \in \mathcal L  (n\cdot 0 +a) $, then 
$$
\sum_{\substack{\Phi(p) = \infty \\ p \in \Omega}} \Im (p) =\begin{cases}
    0 \mod 2\tau, & \text{if $ a\in \gamma_1$ or if $\Phi$ analytic at $a$,}\\
     \tau \mod 2\tau, & \text{if  $a\in \gamma_2$ and $\Phi$ has a pole at $a$.} 
\end{cases}
$$
Note that $z,w\in \Omega$ are such that $w =\overline{z} \mod \Lambda$, then $z\neq w \mod \Lambda$ if and only if $z,w\notin \gamma_1\cup \gamma_2$. In addition, for such a pair $z,w$, by assumption on $\Phi$, $\Phi(z)=0$ if and only if $\Phi(w)=0$. However, zeros of $\Phi$ on $\gamma_1$, or pairs of zeros $(z,w)$ not on $\gamma_1\cup \gamma_2$ do not contribute to the sum in 
$$
\sum_{\substack{\Phi(z) = 0 \\ z \in \Omega}} \Im(z) ,
$$
so that
$$
\sum_{\substack{\Phi(z) = 0 \\ z \in \Omega}} \Im(z) = \sum_{\substack{\Phi(z) = 0 \\ z \in \gamma_2}} \Im(z) .
$$
This yields that
$$
\sum_{\substack{\Phi(z) = 0 \\ z \in \gamma_2}} \Im(z)=\begin{cases}
    0 \mod 2\tau, & \text{if $ a\in \gamma_1$ or $\Phi$ analytic at $a$,}\\
     \tau \mod 2\tau, & \text{if  $a\in \gamma_2$ and $\Phi$ has a pole at $a$,} 
\end{cases}
$$
which is equivalent to the statement of the lemma.
				\end{proof}		

We now investigate the location of the zeros. In the following statement, we denote by $[\cdot]$ the integer part of a real number.		
   \begin{prop}\label{prop:NumberOfzeros}
				Let $f_n$ be the $n$-th orthogonal elliptic $a$-polynomial with respect to $W$, as in \eqref{orthbasis}. 
						\begin{enumerate}[(i)]
                \item If $\Gamma=\gamma_2$ and $a \in \gamma_1$ then all zeros of $f_n$ are simple, and $2[(n+1)/2]$ of them belong to $ \gamma_2$. Moreover,
                \begin{itemize}
                 \item if $n$ is odd, then $f_n$ has no other zeros on $\Omega$ and has a pole at $a$.
				\item if $n$ is even, $f_n$ can have at most one additional zero, which is simple and on $\gamma_1$, if and only if it has a pole at $a$.	
                \end{itemize}
                \item If $\Gamma=\gamma_1$ and $a \in \gamma_2$ then all zeros of $f_n$ are simple, and $n$ of them belong to $ \gamma_1$. In addition, $f_n$ has a zero on $\gamma_2$ if and only if it has a pole at $a$.
\end{enumerate}							
				\end{prop}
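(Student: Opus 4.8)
The plan is to transplant the classical OPRL quadrature argument to the torus, controlling everything through two elliptic inputs: the fact that the number of zeros of an elliptic function in $\Omega$ equals its number of poles, and the parity information of Lemma~\ref{lem:EvenAmountOfzeros}. Since $f_n\in\mathcal L(n\cdot 0+a)$ has a pole of order exactly $n$ at $0$ and at most a simple pole at $a$, its total number of zeros in $\Omega$ (with multiplicity) equals the number of its poles, namely $n$ if $f_n$ is analytic at $a$ and $n+1$ if it has a pole there. Because $f_n$ is real on $\gamma_1\cup\gamma_2$ (by \eqref{eq:DetFormula} and the real-valuedness of the $b_j$), Schwarz reflection gives $f_n(\bar z)=\overline{f_n(z)}$, so the zeros lying off $\gamma_1\cup\gamma_2$ occur in conjugate pairs; the only conjugation-invariant points of $\Omega$ are those on $\gamma_1\cup\gamma_2$. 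These observations, together with the parity of the number of $\gamma_2$-zeros from Lemma~\ref{lem:EvenAmountOfzeros}, will locate every zero once a lower bound on the number of zeros on the contour of orthogonality is in hand.

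The heart of the matter is that lower bound: I will show that $f_n$ has at least $2[(n+1)/2]$ sign changes on $\gamma_2$ in case (i), respectively at least $n$ on $\gamma_1$ in case (ii). Suppose not, and let $z_1,\dots,z_m$ be the sign changes on the contour, with $m$ strictly below the claimed bound; a short check shows $m\le n-1$, so any $Q$ built below lies in $\mathcal L((n-1)\cdot 0+a)$ and \eqref{deforth} applies. I then construct a test function $Q$ with simple zeros exactly at $z_1,\dots,z_m$ and no other zero on the contour, so that $f_n Q W$ is sign-definite there and $\int_\Gamma f_n Q W\,\dd s\neq 0$, contradicting orthogonality. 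The elliptic obstruction is that, by \eqref{eq:EllipticFunctionsProperty}, such a $Q$ cannot have its zeros prescribed freely: fixing the pole divisor $m\cdot 0+a$ forces a single \emph{companion} zero at $w\equiv a-\sum_j z_j \pmod\Lambda$. The decisive computation is the imaginary part, $\Im w\equiv \Im a-\sum_j\Im z_j\pmod{2\tau}$, which places $w$ on $\gamma_1$ in case (i) and on $\gamma_2$ in case (ii) — that is, always on the curve \emph{complementary} to the contour of orthogonality. Hence $Q$ has no spurious zeros on $\Gamma$, the product keeps its sign, and the contradiction follows. Existence of $Q$ with this divisor is again guaranteed by \eqref{eq:EllipticFunctionsProperty}, and its reality on $\gamma_1\cup\gamma_2$ follows since its divisor is conjugation-invariant.

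With the lower bound secured, the counts close up by the principle ``zeros $=$ poles''. In case (i) with $n$ odd, the bound forces $n+1$ sign changes on $\gamma_2$; as the total number of zeros is at most $n+1$, all zeros are simple, lie on $\gamma_2$, and $f_n$ must have a pole at $a$. For $n$ even the bound yields exactly $n$ simple zeros on $\gamma_2$ (evenness from Lemma~\ref{lem:EvenAmountOfzeros} excludes more); if $f_n$ is analytic at $a$ these exhaust all zeros, while if it has a pole at $a$ there is exactly one further zero, which can be neither half of an off-axis conjugate pair nor a point of $\gamma_2$ (parity), hence lies on $\gamma_1$. Case (ii) runs identically once one records the parity relation $m\equiv n\pmod 2$, forced by the behaviour $f_n\sim c/z^n$ across the pole at $0$ on the loop $\gamma_1$: this makes $f_n Q$ keep its sign across $0$, so the quadrature argument survives the pole sitting on the integration contour; the odd parity on $\gamma_2$ from Lemma~\ref{lem:EvenAmountOfzeros} then gives exactly one $\gamma_2$-zero precisely when $f_n$ has a pole at $a$.

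The main obstacle is precisely the test-function construction: unlike on the real line, one cannot prescribe $m$ zeros of an elliptic $a$-polynomial at will, and the real work is to verify that the unavoidable companion zero never contaminates the contour of orthogonality — this is settled by the computation of $\Im w$ above, the one genuinely elliptic ingredient. A secondary, case-(ii)-specific difficulty is that the pole of $f_n$ at $0$ lies on $\gamma_1$, so the orthogonality pairing must be read on the freely-deformable contour of $\Omega\setminus\{0,a\}$ and the sign-definiteness checked on $\gamma_1\setminus\{0\}$; the parity $m\equiv n$ is exactly what reconciles the sign across the pole and keeps the argument intact.
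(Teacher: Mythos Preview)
Your approach is essentially the paper's: both construct a test function (your $Q$, the paper's $\Phi$) with simple zeros at the odd-multiplicity zeros of $f_n$ on $\Gamma$, verify that the single companion zero forced by the Abel condition lands on the complementary curve, and derive the contradiction from sign-definiteness of $f_nQW$. Two small imprecisions to tighten: in case~(i) your claim that $w\in\gamma_1$ (and your ``short check'' that $m\le n-1$) silently uses that $m$ is even, which holds because $f_n$ is continuous and real on the closed loop $\gamma_2$---state this; and the existence of $Q$ with the prescribed divisor is the \emph{converse} of \eqref{eq:EllipticFunctionsProperty}, i.e.\ Abel's theorem, not the identity itself.
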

				\begin{proof}
The proof mimics the classical arguments for OPRL but needs an additional fact about the existence of a particular elliptic function. Let us prove \textit{(i)}; the statement of \textit{(ii)} is established in the same fashion.

Assume that $\Gamma=\gamma_2$, $a\in \gamma_1$, and let $  z_1, \dots , z_m$ be the distinct zeros of $f_n$ on $\gamma_2$ with odd multiplicities. Since $f_n\in \mathcal L (n\cdot 0 +a)$, we have that $m\le n+1$. Inspired by \eqref{eq:EllipticFunctionsProperty}, let $y\in \Omega$ be such that
$$
 y + \sum_{k=1}^m z_k = a= m\cdot 0 + a \mod \Lambda.
$$
By Lemma \ref{lem:EvenAmountOfzeros}, $m$ is even, which shows that $y\in \gamma_1$. By Abel's Theorem \cite[Chapter V, Theorem 2.8]{Miranda}, there exists an elliptic function $\Phi$ with simple zeros at $z_1,\dots, z_m$ and at $y$, a simple pole at $a$, and a pole of multiplicity $m$ at $0$. Moreover, $\Phi(z)\in \R$ for $z\in \gamma_1\cup \gamma_2$. Indeed, since $\Phi \in \mathcal L(m\cdot 0 + a)$, we can write $\Phi = \sum_{j=0}^{m} \lambda_j b_j$, and it is sufficient to show that all $\lambda_j\in \R$. The vector $\lambda^T=(\lambda_1, \dots, \lambda_m)$ is a non-trivial solution of the homogeneous system $A\lambda = 0$, with $A=\left( b_j(z_i)\right)_{i,j=0,1,\dots,m}\in \R^{(m+1)\times (m+1)}$, and thus, we can always choose $\lambda\in \R^m$. 

Since the product $\Phi f_n$ is real-valued and has constant sign on $\gamma_2$, the positivity of $W$ implies that
     \[
         \int_{\gamma_2} \Phi(z) f_n(z) W(z) \dd z \neq 0.
     \]
This contradicts the orthogonality of $f_n$, unless $m\ge n$. Thus, $n\le m\le n+1$. Since $m$ is even, we conclude that $m$ is equal to the only even integer in the set $\{n, n+1\}$. In other words, if $n$ is odd, then $m=n+1$; it means that all zeros $z_1, \dots, z_m$ are simple and that $f_n$ must have a pole at $z=a$. 
On the other hand, if $n$ is even, then $m=n$. Once again, all zeros $z_1, \dots, z_m$ must be simple, but $f_n$ still can have another (simple) zero, this time on $\gamma_1$, which occurs if and only if $f_n$ has a pole at $z=a$.
		\end{proof}
\begin{remark}
    On Section \ref{sec:OPRL-EOP} we present choices of $W$ and $a$ for which the sequence $f_{2n}$ never have a pole at $a$.
\end{remark}

 To finish the proof of Theorem \ref{theo:zerosAndInterlacing}, it only remains to verify the interlacing property. The lack of the three-term recurrence relation satisfied by $\{f_n\}$ impedes the application of the standard arguments based on the Christoffel-Darboux kernel (see, e.g., \cite[Theorem 2.2.3]{Ismail2005}). 
        \begin{prop}\label{prop:Interlacing}
Let $f_n$, $n\ge 1$, be the $n$-th orthogonal elliptic $a$-polynomial with respect to $W$, as in \eqref{orthbasis}. 
\begin{enumerate}[(i)]
    \item  If $\Gamma = \gamma_1$ and $a \in \gamma_2$, then $f_{n+1} \prec f_n$ in $\gamma_1$;
    \item  If $\Gamma = \gamma_2$, $a \in \gamma_1$ and $n$ is odd, then $f_n \prec f_{n+1}$ in $\gamma_2$ or $f_{n+1} \prec f_n$ in $\gamma_2$. 
        \end{enumerate}
        \end{prop}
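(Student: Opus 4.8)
The plan is to reduce both assertions to a single sign-alternation statement and then feed in the zero counts already furnished by Proposition~\ref{prop:NumberOfzeros}. Order the zeros $z_1,\dots,z_{N}$ of $f_{n+1}$ lying on $\Gamma$ by their real part along $\Gamma$ (here $N=n+1$ in case (i); and $N=n+1$ again in case (ii), since for odd $n$ both $f_n$ and $f_{n+1}$ carry $n+1$ zeros on $\gamma_2$). I would prove that $f_n(z_i)\neq 0$ for every $i$ and that $\mathrm{sign}\,f_n(z_i)$ alternates with $i$. Granting this, $f_n$ changes sign on each open arc $(z_i,z_{i+1})$ and hence has at least one zero there; comparing with the exact number of zeros of $f_n$ on $\Gamma$ from Proposition~\ref{prop:NumberOfzeros} forces exactly one zero of $f_n$ per arc (and none in the complementary arc), which is precisely the pattern of \eqref{interlacing1}--\eqref{interlacing2} in Definition~\ref{def:interlacing}; strictness is automatic from the non-vanishing of $f_n$ at the $z_i$.

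To produce the alternation I would run the recurrence-free form of the classical Stieltjes argument, using \emph{only} the orthogonality of $f_{n+1}$ to $\mathcal L(n\cdot 0+a)$. For each index $i$, Abel's theorem (exactly as in the proof of Proposition~\ref{prop:NumberOfzeros}, including the real-linear-system argument that makes the coefficients real) yields a real, monic cofactor $Q_i\in\mathcal L(n\cdot 0+a)$ with simple zeros at the remaining nodes $\{z_j: j\neq i\}$. Expanding $Q_i=c_i f_n+R_i$ with $R_i\in\mathcal L((n-1)\cdot 0+a)$, orthonormality gives $\int_\Gamma f_n Q_i W=c_i$, and since $Q_i$ is monic this equals the reciprocal of the (positive) leading coefficient of $f_n$, a positive constant independent of $i$. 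The technical heart is to evaluate the same integral by a Gauss-type quadrature $\int_\Gamma gW=\sum_j\lambda_j g(z_j)$ with nodes at the zeros of $f_{n+1}$ and strictly positive weights $\lambda_j$, giving $\int_\Gamma f_n Q_i W=\lambda_i f_n(z_i)Q_i(z_i)$. Then $f_n(z_i)Q_i(z_i)>0$ for every $i$; as $Q_i$ changes sign at each of the nodes it vanishes at, $\mathrm{sign}\,Q_i(z_i)$ alternates with $i$, and hence so does $\mathrm{sign}\,f_n(z_i)$.

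I expect the main obstacle to be genuinely torus-specific, and to be exactly what defeats the Christoffel--Darboux route: on $\TT$ there is no elliptic function with a single simple zero, so one cannot "divide out" one root, and the product of two elements of $\mathcal L(n\cdot 0+a)$ acquires a \emph{double} pole at the anchor $a$, leaving the orthogonality space. Consequently each cofactor $Q_i$ drags along a forced balancing zero $y_i$ whose position is pinned by the Abel relation $\sum(\text{zeros})\equiv\sum(\text{poles})\bmod\Lambda$ coming from \eqref{eq:EllipticFunctionsProperty}, and the quadrature can be certified only on the subspace where the pole at $a$ stays simple. I would control both points with Lemma~\ref{lem:EvenAmountOfzeros}: it fixes the parity of the number of zeros on the complementary curve, thereby keeping $y_i$ off $\Gamma$ (or on the complementary curve), so that it contributes no spurious sign change, and it also underlies the positivity $\lambda_j=\int_\Gamma \ell_j^2\,W>0$ of the weights attached to the Lagrange functions $\ell_j$.

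Finally, case (ii) runs the same scheme on $\gamma_2$ for odd $n$: Lemma~\ref{lem:EvenAmountOfzeros} equalizes the two counts at $n+1$ and the alternation argument yields interlacing, but—unlike case (i), where the strict inequality $n<n+1$ of the counts fixes the orientation—it leaves undetermined which of the two functions carries the outermost zero along $\gamma_2$, which is precisely why the conclusion is stated as the disjunction $f_n\prec f_{n+1}$ or $f_{n+1}\prec f_n$. The simplicity of all zeros from Proposition~\ref{prop:NumberOfzeros} then lets the implicit function theorem express each zero as an analytic function of $a$, while the strict sign-alternation just established shows that zeros of consecutive $a$-EOPs never collide, securing the closing assertion of Theorem~\ref{theo:zerosAndInterlacing}.
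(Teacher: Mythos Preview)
Your strategy is \emph{not} the one the paper takes, and the difference is not cosmetic: the paper avoids exactly the obstacle you flag and do not resolve.

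\textbf{The gap.} You correctly identify that products of two elements of $\mathcal L(n\cdot 0+a)$ pick up a double pole at $a$, so $f_nQ_i\notin\mathcal L(k\cdot 0+a)$ for any $k$. Your proposed fix, however, does not address this. Controlling the balancing zero $y_i$ via Lemma~\ref{lem:EvenAmountOfzeros} keeps $y_i$ off $\Gamma$, which is fine for the sign pattern of $Q_i$, but it does nothing for the \emph{exactness} of the quadrature on $f_nQ_i$. On the line, Gauss quadrature with nodes at the zeros of $p_{n+1}$ is exact on degree $\le 2n+1$ because one can divide: $g=q\,p_{n+1}+r$ and kill $\int q\,p_{n+1}\,w$ by orthogonality. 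On the torus there is no analogous division that sends $\mathcal L(2n\cdot 0+2a)$ to something of the form $h\,f_{n+1}$ with $h\in\mathcal L(n\cdot 0+a)$; the pole bookkeeping at $a$ does not match. The same problem hits your positivity claim $\lambda_j=\int_\Gamma \ell_j^2\,W$: the squares $\ell_j^2$ again carry a double pole at $a$, so this identity is not justified by the interpolatory construction you have. Without the quadrature being exact on $f_nQ_i$, the chain $c_i=\int f_nQ_iW=\lambda_i f_n(z_i)Q_i(z_i)$ breaks at the second equality, and the sign-alternation conclusion is unsupported.

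\textbf{What the paper does instead.} The paper works entirely with \emph{linear} combinations, never products, and thereby stays inside $\mathcal L((n+1)\cdot 0+a)$. It shows that every nontrivial real combination $G=Af_n+Bf_{n+1}$ has at least $n$ sign changes on $\Gamma$ (by the same Abel--test-function construction used in Proposition~\ref{prop:NumberOfzeros}) and at most $n+1$ zeros there (via Lemma~\ref{lem:EvenAmountOfzeros}), hence all its zeros on $\Gamma$ are simple. It follows that the system $Af_n(z)+Bf_{n+1}(z)=0$, $Af_n'(z)+Bf_{n+1}'(z)=0$ has only the trivial solution, i.e.\ the Wronskian $f_{n+1}f_n'-f_nf_{n+1}'$ is nonvanishing on $\Gamma$. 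From this the interlacing drops out by the standard argument (evaluate at consecutive zeros of $f_n$), and in case~(i) the sign of the Wronskian is pinned by the behaviour as $z\to 0$, fixing the orientation $f_{n+1}\prec f_n$. This route needs none of the quadrature machinery and is not obstructed by the double pole at $a$.
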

        \begin{proof}
       Consider a linear combination $G  = A f_n  + B f_{n+1} $, with $A, B \in \R$, $A^2 +B^2\neq 0$. Since $G\in \mathcal L ((n+1)\cdot 0 +a)$, the total number of its zeros in $\Omega$, with the account of multiplicity, is $\le n+2$. Moreover, by equation \eqref{eq:EllipticFunctionsProperty},
\begin{equation}
    \label{eq:EllipticFunctionsProperty1}
      \sum_{\substack{G(z) = 0 \\ z \in \Omega}} z =    \varepsilon a , \mod \Lambda,
\end{equation}
where $\varepsilon=1$ if $G$ has a pole at $a$, and $\varepsilon=0$ otherwise.

We address the case \textit{(i)} first. By Lemma~\ref{lem:EvenAmountOfzeros}, \textit{(ii)}, the number of zeros of $G$ in $\Omega$ is exactly $n+2$ if and only if $G$ has a pole at $a$, meaning that $\varepsilon=1$ in \eqref{eq:EllipticFunctionsProperty1}. This shows that all zeros of $G$ cannot be on $\gamma_1$, and thus, the number of zeros on $\Gamma$, with account of multiplicity, is $\le n+1$. 

Let $z_1, \dots, z_m$ be the distinct zeros of odd order of $G  = A f_n  + B f_{n+1} $ on $\Gamma$. Reasoning as in the proof of Proposition~\ref{prop:NumberOfzeros}, taking $y\in \Omega$ such that
\begin{equation}
    \label{constructionAbel}
 y + \sum_{k=1}^m z_k = a= m\cdot 0 + a \mod \Lambda,
\end{equation}
and invoking Abel's Theorem, we can claim the existence of an elliptic function $\Phi\in \mathcal L (m \cdot 0 +a)$ with simple zeros at $z_1,\dots, z_m$ and at $y$, a simple pole at $a$, and a pole of multiplicity $m$ at $0$. Moreover, $y\in \gamma_2$, $\Phi$ is real-valued on  $\Gamma$, and $z_1, \dots, z_m$ are the only (simple) zeros of $\Phi$ on $\Gamma$. Thus, 
 \[
         \int_{\Gamma} \Phi(z) G(z) W(z) \, \dd z \neq 0.
     \]
Since $G$ is orthogonal to $\mathcal L ((n-1) \cdot 0 +a)$, we conclude that $m\ge n$. This implies that all zeros of $G$ on $\Gamma$ are simple (otherwise, the total number of zeros would be at least $n+2$). 

We aim at a similar conclusion in the case \textit{(ii)}. Now Lemma~\ref{lem:EvenAmountOfzeros}, \textit{(i)}, yields that $G$ must have an even number of zeros of $G$ on $\gamma_2$, and hence, there are at most $ n+1$ on $\Gamma$. 
Moreover, if $z_1, \dots, z_m$ are the distinct zeros of odd multiplicity of $G  = A f_n  + B f_{n+1} $ on $\Gamma$, then $m$ is even, since otherwise the total amount of zeroes of $G$ on $\Gamma$ would be odd. We take $y\in \Omega$ satisfying \eqref{constructionAbel} and construct $\Phi$ as above, except that now $y \in \gamma_1$. Thus, $z_1, \dots, z_m$ are the only (simple) zeros of $\Phi$ on $\Gamma$. Reasoning as above, we conclude that $m\ge n$, and once again, all zeros of $G$ on $\Gamma$ are simple. 
    
    In both cases \textit{(i)} and \textit{(ii)}, we have shown that $G$ has either $n$ or $n+1$ zeros on $\Gamma$, all simple. This implies that the linear system
    \[
                \begin{pmatrix}
                f_{n}(z)  &   f_{n+1}(z)\\
                f_{n}'(z) &   f_{n+1}'(z)
                \end{pmatrix}
            \begin{pmatrix}
                A \\ B
            \end{pmatrix}
            = 
            \begin{pmatrix}
                0 \\ 0
            \end{pmatrix}
            \]
            admits only the trivial solution for $A=B=0$. Thus, the determinant of the matrix is non-vanishing:
            \begin{equation}
            \label{eq:InterlacingEq}
             f_{n+1}(z) f_n'(z) - f_n(z) f_{n+1}'(z) \neq 0, \quad z \in \Gamma.
            \end{equation}
            In particular, the left-hand side does not change sign on $\Gamma$.

            Finally, we can proceed with the proof of interlacing. Let $x$ and $y$ be two consecutive zeros of $f_n$ on $\Gamma$. All the zeros of $f_n$ are simple (Proposition~\ref{prop:NumberOfzeros}), and thus $f_n'(x) f_n'(y)<0$. From \eqref{eq:InterlacingEq},  $f_{n+1}(x) f_{n+1}(y)<0$, which implies that there is a zero of $f_{n+1}$ on $(x,y)$.  
            This proves the interlacing result: $f_n \prec f_{n+1}$ or $f_{n+1} \prec f_n$.

           In the case \textit{(i)}, when $0\in \Gamma$, we can take $z \to 0$ to conclude that  
           \[
            f_{n+1}(z) f_n'(z) - f_n(z) f_{n+1}'(z) > 0, \quad z \in \Gamma\setminus\{0\}.
            \]
            Let $x$ be the left-most zero of $f_n$ on $\gamma_1$; since the leading coefficient of $f_n$ is strictly positive, $  f_n'(x) < 0$, and equation \eqref{eq:InterlacingEq} shows that $  f_{n+1}(x) < 0$. As $\lim_{z \to 0^+} f_{n+1}(z) =  + \infty$, there exists $x' < x$ such that $f_{n+1}(x) = 0$. A similar argument shows that if $y$ is the right-most zero of $f_n$ then there exists $y' > y$ such that $f_{n+1}(y') = 0$. This finishes the proof for \textit{(i)}.

            \end{proof}

   \section{A correspondence between OPRL and EOP}\label{sec:OPRL-EOP}

In this section, we explore the possible correspondence between the orthogonal $a$-polynomials on $\mathcal T$, and families of OPRL. 
 
 \subsection{A totally symmetric case}\label{sec:TotalSymmetry} \hfill

Starting with a positive weight function $w$ on a compact interval of the real-line, we discuss next how to lift its corresponding family of orthogonal polynomials to a family of orthogonal elliptic $a$-polynomials, corresponding to a specific torus $\TT$, for which the orthogonality weight $W$ is an even function and $a$ is a half-period. This corresponds to the maximum amount of symmetry possible over $W$ and $a$. We do it with the goal of illustrating the procedure in the simplest situation. 

Given two real numbers  $e_2, e_3$ satisfying
\begin{equation}
    \label{condtone}
e_3<0, \quad e_3<e_2<|e_3|/2,  
\end{equation}
define $e_1=-e_2-e_3$; notice that with our assumptions, the relation
$$
  e_3<e_2<e_1 
$$
holds. Let $w$ be a weight that is $>0$ almost everywhere on the real interval $[e_3,e_2]$, and let 
\begin{equation}
    \label{wtilde}
 \widetilde{w}(x) \defeq \frac{(x-e_3)(e_2-x)}{e_1 - x}\, w(x), \quad x\in [e_3 , e_2]
 \end{equation}
 be its rational modification. We denote by $P_n(\cdot; w)$ and $P_n(\cdot; \widetilde w)$ the algebraic monic  polynomial of degree $n$, orthogonal on $[e_3,e_2]$ with respect to $w$ and $\widetilde w$, respectively.

By formula~\eqref{eq:wpODE}, the triplet $(e_1, e_2, e_3)$ determines uniquely the Weierstrass function $\wp = \wp (\cdot; \Lambda)$, where the lattice $\Lambda$ is rectangular, meaning that we can take the half-periods $ \omega_1 >0$ and $ \omega_3 = i \tau$, $\tau>0$ (see \eqref{formulaperiods}).

       \begin{thm}\label{teo:lifting_symmetric}
            Under the assumptions above, let $a = \omega_1\in \gamma_1$. Then the function $F_n\in \mathcal L(n\cdot 0 +a)$, $n\in \Z_{\ge 0}$, defined by
            \[
            F_n(z) := \begin{cases}
                P_j(\wp(z); w), \quad n = 2j,\; j\geq 0, \\
                - \dfrac{1}{2}\dfrac{\wp'(z) }{\wp(z) - e_1} P_j(\wp(z); \widetilde w), \quad n = 2j+1,\; j\geq 0,
            \end{cases}
        \]
 is the (monic) $a$-polynomial of the polynomial degree $n$ on the torus $\mathcal T=\C/ \Lambda$, orthogonal with respect to the even weight $W$ on $\gamma_2$, uniquely defined by
    \begin{equation}\label{eq:Weight_of_Lift}
        W(z) := \begin{cases}
        \dfrac{1}{2}\, w(\wp(z)) \wp'(z), & z = \omega_3 + t\omega_1, \quad t\in [0,1],\\[2mm]
        W(2\omega_2-z), & z = \omega_3 + t\omega_1, \quad t\in [1,2].
    \end{cases}
    \end{equation}
        \end{thm}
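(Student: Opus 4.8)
The plan is to verify three things in turn: that each $F_n$ lies in $\mathcal L(n\cdot 0+a)$ and is monic of polynomial degree exactly $n$; that $F_n$ is orthogonal to $\mathcal L((n-1)\cdot 0+a)$ against $W$; and that, since exhibiting such an $F_n$ both proves existence and (by monic normalization) pins it down uniquely, $F_n$ is indeed \emph{the} monic $a$-EOP. The membership and normalization are a pole count. As $\wp\in\mathcal L(2\cdot 0)$, the function $P_j(\wp;w)$ has a single pole of order $2j$ at $0$ with leading behaviour $\wp^j\sim z^{-2j}$, so $F_{2j}$ is monic of degree $2j$. For the odd case the key remark is that, because $\wp(\omega_1)=e_1$ and $\wp'(\omega_1)=0$, the special basis element \eqref{defB1} at $a=\omega_1$ collapses to $b_1(z;\omega_1)=-\tfrac12\,\wp'(z)/(\wp(z)-e_1)$, so that $F_{2j+1}=b_1(\cdot;\omega_1)\,P_j(\wp;\widetilde w)$. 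Using \eqref{eq:wpODE} and the fact that $\wp-e_1$ has a double zero at $\omega_1$ (its only zero on $\Omega$), one checks that $b_1(\cdot;\omega_1)$ has a simple pole at $0$ and a simple pole at $a=\omega_1$; hence $F_{2j+1}\in\mathcal L((2j+1)\cdot 0+a)$ with leading term $z^{-(2j+1)}$, i.e.\ monic of degree $2j+1$.

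For orthogonality it suffices to test $F_n$ against the basis $b_0,\dots,b_{n-1}$ of $\mathcal L((n-1)\cdot 0+a)$. The engine is parity under the involution $\sigma(z)=2\omega_2-z$, which on $\TT$ equals $z\mapsto -z$ (since $2\omega_2=2\omega_1+2\omega_3\in\Lambda$) and preserves $\gamma_2$. Because $\wp$ is even, $\wp'$ odd, and $W\circ\sigma=W$ by construction, each $b_{2k}$ and each $F_{2j}$ is $\sigma$-even while $b_1$, each $b_{2k+1}$, and each $F_{2j+1}$ is $\sigma$-odd. A $\sigma$-odd integrand integrates to zero over $\gamma_2$, so every ``mixed parity'' pairing vanishes automatically: this disposes of $F_{2j}$ against odd $b_k$ and of $F_{2j+1}$ against even $b_k$. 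It remains to treat the even--even pairings (for $n=2j$) and the odd--odd pairings (for $n=2j+1$), where the integrand is $\sigma$-even and $\int_{\gamma_2}=2\int_{\text{first half}}$.

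On the first half $z=\omega_3+t\omega_1$, $t\in[0,1]$, the map $x=\wp(z)$ is a real increasing bijection onto $[e_3,e_2]$, and $W(z)\,\dd z=\tfrac12 w(x)\,\dd x$. For the even--even case, $\int_{\gamma_2}F_{2j}\,b_{2i}\,W\,\dd z=\int_{e_3}^{e_2}P_j(x;w)\,x^i\,w(x)\,\dd x=0$ for $i<j$ by orthogonality of $P_j(\cdot;w)$. For the odd--odd case, the product $F_{2j+1}\,b_k$ carries exactly two factors of $\wp'$, which combine through \eqref{eq:wpODE} into a rational function of $\wp$ alone, while the single $\wp'$ inside $W$ supplies the Jacobian of $x=\wp$. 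Pairing $F_{2j+1}$ with $b_1(\cdot;\omega_1)$ gives (after doubling) $\int_{e_3}^{e_2}\frac{(x-e_2)(x-e_3)}{x-e_1}P_j(x;\widetilde w)\,w(x)\,\dd x$, and pairing with $b_{2i+1}=-\tfrac12\wp'\wp^{i-1}$, $1\le i\le j-1$, gives $\int_{e_3}^{e_2}(x-e_2)(x-e_3)\,x^{i-1}P_j(x;\widetilde w)\,w(x)\,\dd x$. The decisive algebraic identities are
\[
\frac{(x-e_2)(x-e_3)}{x-e_1}\,w(x)=\widetilde w(x),
\qquad
(x-e_2)(x-e_3)\,x^{i-1}\,w(x)=(x-e_1)\,x^{i-1}\,\widetilde w(x),
\]
which are exactly what the rational modification \eqref{wtilde} was designed to produce. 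Since $\{1\}\cup\{(x-e_1)x^{i-1}:1\le i\le j-1\}$ spans the polynomials of degree $\le j-1$, all these integrals vanish by orthogonality of $P_j(\cdot;\widetilde w)$.

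I expect the main obstacle to be precisely the bookkeeping that forces the odd--odd pairings into $\widetilde w$-orthogonality: one must simplify $(\wp')^2/(\wp-e_1)$ correctly, treat the single special element $b_1$ separately (it falls outside the generic pattern $b_{2i+1}=-\tfrac12\wp'\wp^{i-1}$), and verify that the resulting test polynomials $(x-e_1)x^{i-1}$ together with the constant indeed exhaust degrees $\le j-1$ — this is the step where the specific shape of $\widetilde w$ in \eqref{wtilde} is indispensable. The remaining points are routine: that $W$ is well defined and continuous at the midpoint $t=1$ (where $\wp'=0$ makes both branches vanish), that $\gamma_2$ is $\sigma$-invariant with $\wp|_{\gamma_2}$ mapping onto $[e_3,e_2]$ twice, and that the integrand on $[e_3,e_2]$ is genuinely integrable since $e_1>e_2\ge x$ keeps $x-e_1$ bounded away from $0$.
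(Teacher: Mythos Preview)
Your proof is correct and follows essentially the same route as the paper: use the evenness of $W$ (equivalently, the involution $z\mapsto -z$ on $\gamma_2$) to kill all mixed-parity pairings, then reduce the remaining same-parity integrals to orthogonality on $[e_3,e_2]$ via the substitution $x=\wp(z)$ and the identity $(\wp')^2=4\prod(\wp-e_i)$. The only cosmetic difference is that the paper tests $F_n$ against the previously constructed $F_m$ rather than against the raw basis $b_k$, which makes the odd--odd case a one-liner ($\int F_{2j+1}F_{2l+1}W\,\dd z=\tfrac14\int P_j P_l\,\widetilde w\,\dd x$) and avoids your separate treatment of $b_1$ and the spanning check for $\{1\}\cup\{(x-e_1)x^{i-1}\}$; your version is slightly more explicit but not materially different.
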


Notice that $F_n$ has a pole at $a$ if, and only if, $n$ is odd. When $n$ is even, $F_n$ coincide with the even elliptic polynomials with symmetric orthogonality weight constructed in \cite{Desiraju2024}.
        \begin{proof}
Under our assumptions on the periods, function $\wp(\omega_3 + t\omega_1)$ is strictly increasing on $t\in (0,1)$, taking all values in the interval $(e_3, e_2)$, see e.g., \cite[$\mathsection 22$]{Markushevich}. By symmetry,  $\wp(\omega_3 + t\omega_1)$ is strictly decreasing on $t\in (1,2)$; see Figure \ref{fig:wp}, right.

    \begin{figure}[ht]
\centering
\begin{subfigure}{.5\textwidth}
  \centering
  \includegraphics[scale=0.6]{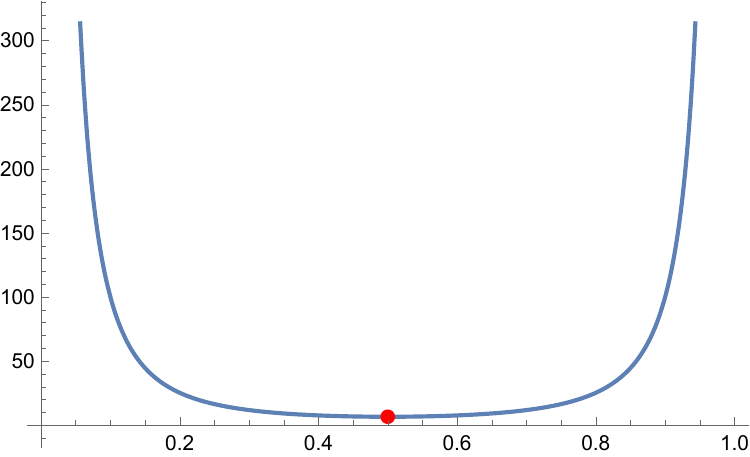}
  \caption{$\wp(t\omega_1)$, $0 \leq t \leq 2$.}
  \label{fig:sub1}
\end{subfigure}%
\begin{subfigure}{.5\textwidth}
  \centering
  \includegraphics[scale=0.6]{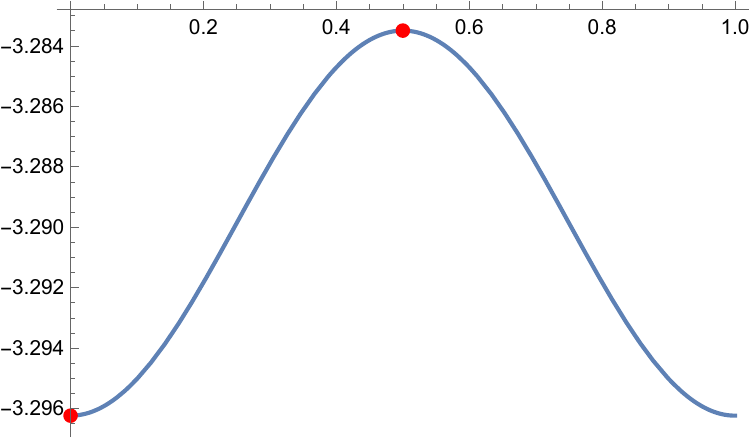}
  \caption{$\wp(\omega_3 + t\omega_1)$, $0 \leq t \leq 2$.}
  \label{fig:sub2}
\end{subfigure}
\caption{Plots of $\wp(z)\in \R$ for $2 \omega_1 = 1$ and $2\omega_3 =  3i/2$ on the contours $\gamma_1$ (left) and $\gamma_2$ (right). Here $e_1 \approx 6.57974$, $e_2 \approx -3.29624$, and $e_3 \approx -3.2835$.}
\label{fig:wp}
\end{figure}

In a similar fashion, $\wp(t\omega_1)$ is strictly decreasing on $t\in (0,1)$, taking all values in the interval $(e_1, +\infty)$. By symmetry,  $\wp(  t\omega_1)$ is strictly increasing on $t\in (1,2)$ (see Figure \ref{fig:wp}, left).

Additionally, since $\wp\left( \omega_j \right) = e_j$ and $\wp'\left(\omega_j \right) = 0$ (see Equations \eqref{eq:wpODE} and \eqref{defE}), in this case $b_1$ from \eqref{defB1} is an odd function that takes the form
\begin{equation}
    \label{b1symm}
                b_1(z) 
                =  - \frac{1}{2}\frac{\wp'(z) }{\wp(z) - e_1}.
\end{equation}

            It is straightforward to check that $F_n$ is a monic elliptic $a$-polynomial of degree $n$. We only need to verify the orthogonality conditions. Note that $F_n$ is an even (resp. odd) function if, and only if, $n$ is even (resp. odd).
            
            Since $W$ is an even weight, for $n,m\in \Z_{\ge 0}$, 
        \[
            \int_\Gamma F_{2n}(z) F_{2m+1}(z) W(z) \, \dd z = 0.
        \] 
        For $n = 2j\in \Z_{\ge 0}$, $m=2l\in \Z_{\ge 0}$, $m\neq n$,
        \begin{align*}
            \int_\Gamma F_n(z) F_m(z) W(z) \, \dd z &=
            2 \int_{\omega_3}^{\omega_3+\omega_1} P_j(\wp(z); w) P_l(\wp(z); w) W(z) \, \dd z
           \\
       & =  \int_{\omega_3}^{\omega_3+\omega_1} P_j(\wp(z); w) P_l(\wp(z); w) w(\wp(z)) \wp'(z) \, \dd z \\
       & =\int_{e_3}^{e_2} P_j(x;w) P_l(x;w) w(x) \, \dd x = 0.
         \end{align*}
        Analogously, using \eqref{eq:wpODE} we get that for $n = 2j+1\in \N$, $m = 2l+1\in \N$, $m\neq n$,
        \[
            \int_\Gamma F_n(z) F_m(z) W(z) \, \dd z = 
            \frac{1}{4} \int_{e_3}^{e_2} P_j(x;\widetilde w) P_l(x;\widetilde w) \frac{(x-e_2)(x-e_3)}{(x-e_1)} w(x) \, \dd x = 0.
        \]
        \end{proof}

Theorem~\ref{teo:lifting_symmetric} shows how to lift the orthogonal polynomials from the real line to a torus $\mathcal T$, corresponding to a rectangular lattice. The obtained construction had the feature that
    \begin{equation}\label{eq:symmetry-assumptions}
         W \text{ is an even function on } \Gamma = \gamma_2, \qquad a = \omega_1 \text{ (half period). } 
    \end{equation}
    Note that for every positive even weight function $W$ defined on $\gamma_2$, equation \eqref{eq:Weight_of_Lift} holds with
    \[
        w(x) = \frac{W(\wp^{-1}(x))}{\sqrt{ (e_1 - x)(e_2-x)(x-e_3)}} > 0, \quad e_3 < x < e_2,
    \]
    therefore Theorem \ref{teo:lifting_symmetric} can be applied to $w$, characterizing the sequence of $a$-EOP with respect to $W$ in terms of $P_n(x,w)$ and $P_n(x,\tilde{w})$, where
    \[
        \tilde{w}(x) = \sqrt{\frac{(x-e_3)(e_2-x)}{(e_1 - x)^3}} W(\wp^{-1}(x)).
    \]
   The following identities also holds true for the monic orthogonal polynomials associated to $w$ and $\tilde{w}$ defined by \ref{wtilde},
     \[
        P_j(x,w) = F_{2j}(\wp^{-1}(x)), \quad P_j(x, \tilde{w}) = \sqrt{\frac{e_1 - x}{(e_2 - x)(x - e_3)}} F_{2j+1}(\wp^{-1}(x)), \quad x \in [e_3,e_2],
     \]
     where $F_n$ is the $n$-th monic $a$-EOP associated to $W$, with $a = \omega_1$.
     
In other words, the construction of Theorem~\ref{teo:lifting_symmetric} is reversible, and establishes a bijection between $a$-EOPs with respect to the weight $W$ satisfying \eqref{eq:symmetry-assumptions}, and two related sequences of OPRL.

\begin{example} \label{exampleJacobi1}
Consider the torus $\TT$ defined by the square lattice 
$$
\Lambda = 2\omega_1   \Z + 2i\omega_1    \Z, \quad \omega_1=\frac{32 \pi }{\Gamma \left( \frac{1}{4}\right)^4},
$$
for which the constants \eqref{defE} can be found explicitly, see \cite[$\mathsection 23.5$(iii)]{NIST:DLMF}:
    \[
        e_2 = 0, \quad e_1 = 1= - e_3,
    \]
    as well as the Weierstrass invariants,
    $$
    g_2= 4 \omega_1^{-6}, \quad g_3=0.
    $$

We illustrate the application of Theorem \ref{teo:lifting_symmetric} by an explicit construction of elliptic orthogonal $a$-polynomials with respect to a weight on $\mathcal T$ in terms of the Jacobi family of OPRL. 

Let $\wp(z)=\wp(z;\Lambda)$, and consider the weight
  \begin{equation}\label{eq:Example_W}
        \left| \wp(z)\right|^{\alpha + \frac{1}{2}} (\wp(z)+1)^{\beta + \frac{1}{2}}  (1 - \wp(z))^{\frac{1}{2}}>0, \quad z \in \gamma_2,
    \end{equation}
where $\alpha, \beta>-1$.

If $ P_j^{(\alpha, \beta)} $  denotes the $j$-th monic Jacobi polynomial, then 
\[
        P_j(x;w) = 2^{-j} P_j^{(\alpha,\beta)}(2x + 1),
    \]
is the monic polynomial of degree $j$, orthogonal on $[-1,0]$ with respect to the weight
        \[
            w(x) = |x|^\alpha(1+x)^\beta.
        \]

According to Theorem \ref{teo:lifting_symmetric}, for even values of $n$,
    \[
        F_n(z) = \frac{1}{2^j} P_j^{\alpha, \beta} (2\wp(z) + 1), \quad n = 2j,
    \]
    is the $n$-th elliptic orthogonal $a$-polynomial, $a = \omega_1$, with respect to the weight \eqref{eq:Example_W}.
  
On the other hand, the corresponding $\widetilde{w}$ function defined in \eqref{wtilde} is 
    $$
    \widetilde w(x):=\frac{|x|^{\alpha+1} (x+1)^{\beta+1}}{1-x}, \quad x\in [-1,0],
    $$
   which is a rational deformation of a Jacobi weight on $[-1,0]$. Define
   \begin{equation}
       \label{defkappan}
    \lambda_n=    \lambda_n(\alpha,\beta) \defeq 
            \int_{-1}^1 \frac{P_n^{(\alpha+1,\beta+1)}(s)}{3-s} (1-s)^{\alpha+1} (1+s)^{\beta+1} \, \dd s.
 \end{equation}
It is easy to directly check (see e.g.~\cite[Theorem 2.7.2]{Ismail2005}) that in this case,
   \[
        \begin{split}
            P_j \left( x, \widetilde w\right) 
            &= \frac{1}{2^{j} \lambda_{j-1}} \left[
            \lambda_{j-1} P_j^{(\alpha +1, \beta + 1)}(2x+1)- \lambda_j P_{j-1}^{(\alpha +1, \beta + 1)}(2x+1)  \right] .
        \end{split}
    \]
    Therefore, by Theorem \ref{teo:lifting_symmetric}, for odd values of $n$,
\begin{align*}
     F_n(z) = & \frac{1}{2^{j+1}\lambda_{j-1}}\frac{\wp'(z)}{1 - \wp(z)} \\
     & \times \left[ \lambda_{j-1} P_j^{(\alpha +1, \beta + 1)}(2 \wp(z) + 1) - \lambda_j P_{j-1}^{(\alpha +1, \beta + 1)}(2 \wp(z) + 1) \right], \quad \text{if }n = 2j+1
\end{align*}
    is the $n$-th elliptic $a$-orthogonal polynomial with respect to $W$ in \eqref{eq:Example_W}.
\end{example}

\begin{example} \label{exampleJacobi2}
Let $\mathcal T$ be the torus of Example~\ref{exampleJacobi1}, but instead of the weight $W$ as in \eqref{eq:Example_W}, we consider
 \begin{equation}
     \label{eq:Example_V}
        V(x) = 
        \left|\wp(z)\right|^{\alpha - \frac{1}{2}} (\wp(z) + 1)^{\beta - \frac{1}{2}} (1 - \wp(z))^{\frac{3}{2}} , \quad z \in \gamma_2.
 \end{equation}

Take
    \[
        v(x) =  (1-x)(1+x)^{\beta - 1} |x|^{\alpha -1}, \quad 
         \alpha, \beta > 0, \quad x \in [-1,0].
    \]
    and the corresponding weight $\widetilde{v}$ (see \eqref{wtilde}),
    \[
         \widetilde v(x) = |x|^\alpha (1+x)^\beta , \quad x \in [-1,0].
    \]
     Note that $v$ is a polynomial (Christoffel) deformation of the Jacobi weight on $[-1,0]$ with parameters $\beta-1$ and $\alpha-1$. 
    
    Then, by Theorem \ref{teo:lifting_symmetric}, for odd values of $n$,
    \[
        G_n(z) =  \frac{1}{2^{j+1}} \frac{\wp'(z)}{1 - \wp(z)} P_j^{(\alpha, \beta)} (2 \wp(z) + 1), \quad n = 2j+1,
    \]
    is the $n$-th elliptic orthogonal $a$-polynomial, $a = \omega_1$, with respect to the weight \eqref{eq:Example_V}. 
   
On the other hand, \cite[Theorem 2.7.1]{Ismail2005} and direct computation shows that  
    \[
        P_j(x,v) = \frac{K_j^{(\alpha-1, \beta-1)}(2x+1,3)}{2^{j} P_j^{(\alpha-1,\beta-1)}(3)},
    \]
where $K_j^{(\alpha,\beta)}$ is the (scaled) Christoffel-Darboux kernel,
    \begin{equation}\label{eq:cdlikekernel}
        K_j^{(\alpha,\beta)}(x,y) 
            \defeq \frac{P_{j+1}^{(\alpha, \beta)}(x) P_j^{(\alpha, \beta)}(y) - P_{j+1}^{(\alpha, \beta)}(y) P_j^{(\alpha, \beta)}(x)}{x-y}. 
    \end{equation}
    
    By Theorem \ref{teo:lifting_symmetric}, for even values of $n$,
    \[
        G_n(z) = \frac{K_j^{(\alpha-1, \beta-1)}(2\wp(z)+1,3)}{2^{j} P_j^{(\alpha-1,\beta-1)}(3)} , \quad n = 2j,
    \]
    is the $n$-th elliptic orthogonal $a$-polynomial associated to the weight $V$.

\end{example}

    Interestingly, the interlacing of the $a$-EOPs obtained in Theorem \ref{theo:zerosAndInterlacing} has a consequence for the OPRL sequence with respect to $w$ and $\tilde{w}$. This is not expected, as no such result holds for general rational deformations of measures.
    \begin{cor}\label{cor:interlacing}
        Set $\,\widehat \gamma_2 = \{ \omega_3 + t \omega_1; t \in [0,1]\}$. Then, with the notations of Definition~\ref{def:interlacing} and Theorem \ref{teo:lifting_symmetric}, $F_n \prec F_{n+2}$ in $\widehat{\gamma}_2$.
        Additionally, for the orthogonal polynomials with respect to $w$ and its rational deformation $\widetilde{w}$ as in \eqref{wtilde}, the following interlacing property holds:
        \[
        P_n(\cdot, w) \prec P_{n-1}(\cdot, \widetilde w) \text{ in } [e_3, e_2].
        \]
    \end{cor}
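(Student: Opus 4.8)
The plan is to transport the whole problem to the real interval $[e_3,e_2]$ via the Weierstrass map and then combine two inputs: the classical separation theorem for OPRL for the same-parity statement, and the elliptic interlacing already proved in Proposition~\ref{prop:Interlacing} for the cross-weight statement. The starting observation, taken from the proof of Theorem~\ref{teo:lifting_symmetric}, is that $z\mapsto\wp(z)$ restricts to a strictly increasing real bijection $\wp\colon\widehat\gamma_2\to[e_3,e_2]$; moreover on $\widehat\gamma_2$ one has $\Re z=t\omega_1$ for $z=\omega_3+t\omega_1$, so the intrinsic order of $\widehat\gamma_2$, the order of real parts, and the order on $[e_3,e_2]$ all agree. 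Consequently $\wp$ and $\wp^{-1}$ preserve the interlacing relation of Definition~\ref{def:interlacing}, and it suffices to prove each claim on whichever side is more convenient.

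First I would record the zero dictionary on $\widehat\gamma_2$. By Theorem~\ref{teo:lifting_symmetric}, $F_{2j}(z)=P_j(\wp(z);w)$, so the zeros of $F_{2j}$ on $\widehat\gamma_2$ are exactly the $\wp$-preimages of the $j$ zeros of $P_j(\cdot;w)$ in $(e_3,e_2)$. For the odd index, $F_{2j+1}(z)=b_1(z)\,P_j(\wp(z);\widetilde w)$ with $b_1=-\tfrac12\,\wp'/(\wp-e_1)$ as in \eqref{b1symm}; on $\widehat\gamma_2$ the factor $\wp-e_1$ never vanishes (since $\wp\le e_2<e_1$ there), while $\wp'$ vanishes only at the two endpoints $z=\omega_3$ $(t=0)$ and $z=\omega_2=\omega_1+\omega_3$ $(t=1)$, the two half-periods lying on $\gamma_2$. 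Thus $F_{2j+1}$ has a simple zero at each endpoint of $\widehat\gamma_2$, common to every odd-index $F$, and its remaining zeros on $\widehat\gamma_2$ are the $\wp$-preimages of the $j$ zeros of $P_j(\cdot;\widetilde w)$. The same-parity statement $F_n\prec F_{n+2}$ then reduces, interior zero by interior zero, to the classical separation of consecutive OPRL: for even $n=2j$ to that of $P_j(\cdot;w)$ and $P_{j+1}(\cdot;w)$, and for odd $n=2j+1$ to that of $P_j(\cdot;\widetilde w)$ and $P_{j+1}(\cdot;\widetilde w)$, the two shared endpoint zeros in the odd case sitting at the extremes and not disturbing the strict separation of the interior zeros.

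For the cross-weight statement I would invoke the elliptic interlacing of the consecutive pair. Fix $n\ge1$ and apply Proposition~\ref{prop:Interlacing}\,(ii) (equivalently Theorem~\ref{theo:zerosAndInterlacing}\,(ii)) to $F_{2n-1}$ and $F_{2n}$ on $\Gamma=\gamma_2$, with $a=\omega_1\in\gamma_1$ and $2n-1$ odd: both functions have only simple zeros on $\gamma_2$ and their zeros interlace along the cycle. The reflection $z\mapsto 2\omega_2-z$ maps $\gamma_2$ to itself and $\widehat\gamma_2$ to $\gamma_2\setminus\widehat\gamma_2$, leaves $W$ and $\wp$ invariant, and sends $\wp'\mapsto-\wp'$; hence $F_{2n}$ is even and $F_{2n-1}$ is odd about $\omega_2$, and all zeros on $\gamma_2$ are symmetric about $t=1$. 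Since $F_{2n-1}$ is pinned to vanish at $t=0,1$ while $F_{2n}$ vanishes at neither, the alternation around the cycle is forced to read, on $\widehat\gamma_2$, as $\omega_3\,[F_{2n-1}]<a_1\,[F_{2n}]<b_1\,[F_{2n-1}]<\cdots<a_n\,[F_{2n}]<\omega_2\,[F_{2n-1}]$, so the $n$ zeros of $F_{2n}$ strictly separate the $n-1$ interior zeros $b_1,\dots,b_{n-1}$ of $F_{2n-1}$. Pulling this back through the increasing map $\wp$ and using the dictionary turns the $a_i$ into the zeros of $P_n(\cdot;w)$ and the $b_i$ into the zeros of $P_{n-1}(\cdot;\widetilde w)$, yielding $P_n(\cdot,w)\prec P_{n-1}(\cdot,\widetilde w)$ on $[e_3,e_2]$.

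The main obstacle I anticipate is exactly this transfer from the full cycle $\gamma_2$ to the half-arc $\widehat\gamma_2$. Proposition~\ref{prop:Interlacing}\,(ii) guarantees interlacing in only one of two a priori possible directions on $\gamma_2$, and a naive restriction would not say which zeros bracket which. The symmetry argument is what removes the ambiguity: the forced half-period zeros of $F_{2n-1}$ at both ends of $\widehat\gamma_2$, against the absence of endpoint zeros for $F_{2n}$, rigidly fix the alternation and hence the bracketing direction. Making this rigidity precise, together with checking that no interior zero can collide with an endpoint half-period (which would break simplicity), is the only delicate point; once it is settled, the classical OPRL separation theorem and the monotonicity of $\wp$ on $\widehat\gamma_2$ deliver the remaining two interlacings mechanically.
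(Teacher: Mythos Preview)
Your proposal is correct and follows essentially the same two-step approach as the paper: classical OPRL separation pushed forward through the monotone map $\wp\colon\widehat\gamma_2\to[e_3,e_2]$ for the same-parity claim, and Proposition~\ref{prop:Interlacing}(ii) together with the zero dictionary for the cross-weight claim. Your symmetry argument (parity of $F_{2n}$ and $F_{2n-1}$ about $\omega_2$, plus the forced endpoint zeros of $F_{2n-1}$ at $\omega_3,\omega_2$) is exactly what is needed to resolve the ``$\prec$ or $\succ$'' ambiguity of Proposition~\ref{prop:Interlacing}(ii) and to pass from $\gamma_2$ to $\widehat\gamma_2$; the paper's proof leaves this step implicit, so your write-up is in fact more complete on that point.
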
  
        \begin{proof}
            Assume that $n = 2m$ and let $z_1^{(n)}, \ldots, z_m^{(n)}$ be the zeros of $F_n$ on $\widehat \gamma_2$. By Theorem \ref{teo:lifting_symmetric}, $\wp\left(z^{(n)}_1 \right), \ldots, \wp\left(z^{(n)}_k \right)$ are the zeros of $P_m(\cdot, w)$. Since $P_{m+1}(\cdot, w) \prec P_m(\cdot, w)$  in $[e_2,e_3]$ and $\wp$ is monotone on $\widehat{\gamma_2}$, we see that
            \[
                z_1^{(n+2)} < z_1^{(n)} < z_2^{(n+2)} < \cdots < z_{m}^{(n+2)} < z_m^{(n)} < z_{m+1}^{(n+2)},
            \]
            which establishes that $F_n \prec F_{n+2}$ in $\widehat \gamma_2$ in this case.  The proof for $n = 2m+1$ is analogous, by identifying the zeros of $f_n$ with those of $P_m(\cdot, \widetilde w)$.

            Similarly, the interlacing of $ P_n(\cdot, w)$ and $ P_{n-1}(\cdot, \widetilde w)$ in $[e_3,e_2]$ is  consequence of the identification of their zeros with the zeros of $f_{2n-1}$ and $f_{2n}$, and their corresponding interlacing given by Proposition \ref{prop:Interlacing}.
        \end{proof}

    Applying Corollary \ref{cor:interlacing} to the construction of Examples~\ref{exampleJacobi1} and \ref{exampleJacobi2} we obtain some interlacing properties for Jacobi polynomials:
    \begin{cor} \label{cor:Jacobi}
        Let $P_n^{(\alpha,\beta)}$ be the $n$-th monic Jacobi polynomial, $K_j^{(\alpha, \beta)}$ defined by \eqref{eq:cdlikekernel} and $\lambda_n = \lambda_n(\alpha, \beta)$ defined by \eqref{defkappan}. The polynomials
        \[
        \begin{split}
            R_{n-1}(x) 
                &\defeq P_{n-1}^{(\alpha +1, \beta+1)}(x) \lambda_{n-2} - P_{n-2}^{(\alpha +1, \beta+1)}(x) \lambda_{n-1}, \\
            S_n(x) 
                & \defeq K_{n+1}^{(\alpha-1, \beta-1)}(x,3),
            \end{split}
        \]
        satisfy the following interlacing properties:
        \[
            \begin{split}
                S_n &\prec P_n^{(\alpha, \beta)} \text{ in } [-1,1], \quad \alpha, \beta > 0, \\
                P_n^{(\alpha, \beta)} &\prec R_{n-1} \text{ in } [-1,1], \quad \alpha, \beta > -1.
            \end{split}
        \]

    \end{cor}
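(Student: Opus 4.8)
The plan is to read off both interlacing relations as direct translations of the single interlacing $P_n(\cdot, w) \prec P_{n-1}(\cdot, \widetilde w)$ in $[e_3, e_2]$ supplied by Corollary~\ref{cor:interlacing}, applied to the two explicit weights of Examples~\ref{exampleJacobi1} and~\ref{exampleJacobi2}. In both examples the underlying torus is the one with $e_2 = 0$ and $e_1 = 1 = -e_3$, so that $[e_3, e_2] = [-1,0]$, and the affine map $x \mapsto y = 2x+1$ is an increasing bijection of $[-1,0]$ onto $[-1,1]$. Being strictly increasing on the real axis, this map transports the strict interlacing of real zeros (which is exactly what $\prec$ reduces to when all zeros are real, as is the case here) from $[-1,0]$ to $[-1,1]$ zero by zero and with the ordering preserved; I would invoke this throughout to pass between the variables $x$ and $y$.

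For the second relation I would start from Example~\ref{exampleJacobi1}, where $w(x) = |x|^\alpha(1+x)^\beta$ and $\widetilde w$ is its rational modification \eqref{wtilde}. The explicit formulas recorded there give $P_n(x;w) = 2^{-n} P_n^{(\alpha,\beta)}(2x+1)$ and
\[
P_{n-1}(x;\widetilde w) = \frac{1}{2^{n-1}\lambda_{n-2}}\Bigl[\lambda_{n-2}\, P_{n-1}^{(\alpha+1,\beta+1)}(2x+1) - \lambda_{n-1}\, P_{n-2}^{(\alpha+1,\beta+1)}(2x+1)\Bigr],
\]
whose bracket, read as a polynomial in $y = 2x+1$, is precisely $R_{n-1}(y)$. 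Since nonzero constant prefactors do not affect zeros, the zeros of $P_n(\cdot;w)$ and of $P_{n-1}(\cdot;\widetilde w)$ in $[-1,0]$ are the preimages under $x\mapsto 2x+1$ of the zeros of $P_n^{(\alpha,\beta)}$ and of $R_{n-1}$ in $[-1,1]$. Substituting these identifications into $P_n(\cdot;w)\prec P_{n-1}(\cdot;\widetilde w)$ and applying the change of variables yields $P_n^{(\alpha,\beta)} \prec R_{n-1}$ in $[-1,1]$, valid for $\alpha,\beta > -1$ as in Example~\ref{exampleJacobi1}.

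For the first relation I would argue identically from Example~\ref{exampleJacobi2}, but shifting the index by one. There $\widetilde v(x) = |x|^\alpha(1+x)^\beta$ coincides with the weight $w$ above, so $P_n(x;\widetilde v) = 2^{-n}P_n^{(\alpha,\beta)}(2x+1)$, while $P_{n+1}(x;v) = K_{n+1}^{(\alpha-1,\beta-1)}(2x+1,3)\big/\bigl(2^{n+1}P_{n+1}^{(\alpha-1,\beta-1)}(3)\bigr)$, whose zeros are the preimages of the zeros of $S_n(y) = K_{n+1}^{(\alpha-1,\beta-1)}(y,3)$. Applying Corollary~\ref{cor:interlacing} with $n$ replaced by $n+1$ gives $P_{n+1}(\cdot;v) \prec P_n(\cdot;\widetilde v)$ in $[-1,0]$, and transporting this through $x\mapsto 2x+1$ produces $S_n \prec P_n^{(\alpha,\beta)}$ in $[-1,1]$, for $\alpha,\beta > 0$.

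The degree bookkeeping is immediate and confirms the two interlacing patterns of Definition~\ref{def:interlacing} line up: $\deg R_{n-1} = n-1$ and $\deg S_n = n+1$ against $\deg P_n^{(\alpha,\beta)} = n$, so in the first relation the $n+1$ zeros of $S_n$ strictly separate the $n$ zeros of $P_n^{(\alpha,\beta)}$, and in the second the $n$ zeros of $P_n^{(\alpha,\beta)}$ strictly separate the $n-1$ zeros of $R_{n-1}$, matching pattern \eqref{interlacing2}. The only point requiring a word of care, and the main (and rather mild) obstacle, is to ensure that the normalizing denominators $\lambda_{n-2}$ and $P_{n+1}^{(\alpha-1,\beta-1)}(3)$ are nonzero, so that the identifications of zeros are legitimate and $R_{n-1}$ genuinely has degree $n-1$. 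These nonvanishings are already implicit in Examples~\ref{exampleJacobi1} and~\ref{exampleJacobi2}, since the explicit monic formulas there would be meaningless otherwise: $P_{n+1}^{(\alpha-1,\beta-1)}(3)\neq 0$ because $3$ lies outside $\conv[-1,1]$, while $\lambda_{n-2}\neq 0$ follows from its representation \eqref{defkappan} as the value at $3$ of a function of the second kind for the positive Jacobi weight.
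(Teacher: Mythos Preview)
Your proposal is correct and follows exactly the approach the paper takes: the corollary is obtained by applying Corollary~\ref{cor:interlacing} to the explicit weights of Examples~\ref{exampleJacobi1} and~\ref{exampleJacobi2} and transporting the resulting interlacing on $[-1,0]$ to $[-1,1]$ via the affine map $x\mapsto 2x+1$. Your added remarks on degree bookkeeping and on the nonvanishing of the normalizing constants $\lambda_{n-2}$ and $P_{n+1}^{(\alpha-1,\beta-1)}(3)$ simply make explicit what the paper leaves implicit.
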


    \begin{remark}
  Let us briefly put Corollary~\ref{cor:Jacobi} into the context of the literature on classical orthogonal polynomials.
        
        Note that 
        $R_{n-1}$ and $S_n$ are  polynomials of degree $n-1$ and $n$, respectively, orthogonal with respect to the weights
        \[
            \omega(x) = \frac{(x-1)^{\alpha +1} (x+1)^{\beta + 1}}{3-x}, \quad 
            \widetilde{\omega}(x) = (3-x)(1+x)^{\beta - 1} (x-1)^{\alpha - 1}, \quad x \in [-1,1].
        \]
    Theorem 2.3 in  \cite{DriverJordaanMbuyi2008} establishes the following interlacing between Jacobi polynomials of different degrees,
        \[
            P_n^{(\alpha, \beta)} \prec P_{n-1}^{(\alpha +t, \beta + t')}, \quad t, t' \in (0,2], \quad  \alpha, \beta > -1.
        \]
        We can see the Jacobi weight with parameters $(\alpha+1, \beta+1)$ as a Christoffel deformation of $\omega(x)$ and therefore 
        Theorem~$2$ of 
        \cite {DimitrovIsmailRafaeli2013} 
        gives the interlacing
        \[
            P^{(\alpha + 1, \beta + 1)}_{n-1} \prec R_{n-1}.
        \]
        Corollary \ref{cor:Jacobi} establishes that the chain of interlacing 
        \[
            P_n^{(\alpha, \beta)} \prec P_{n-1}^{(\alpha +1, \beta+1)} \prec R_{n-1}
        \]
        is transitive, fact that apparently is new. 
        
        Similarly, we can see $\widetilde{\omega}$ as a Christoffel deformation of the Jacobi weight with parameters $(\alpha -1, \beta-1)$ and, by Theorem 2 from \cite {DimitrovIsmailRafaeli2013} to $\widetilde{\omega}$,
        \[
            S_n(x) \prec P_n^{(\alpha -1, \beta-1)}(x).
        \]
        The simultaneous increase or decrease of parameters $\alpha$ and $\beta$ can break the interlacing between Jacobi polynomials of the same degree, so there is no expected interlacing between $P_n^{(\alpha-1, \beta-1)}$ and $P_n^{(\alpha, \beta)}$. Corollary~\ref{cor:Jacobi} establishes that $S_n$ is a common interlacing polynomial to both $P_n^{(\alpha, \beta)}$ and $P_n^{(\alpha -1, \beta-1)}$, something that we could not find explicitly stated in the literature.        
    \end{remark}

    \subsection{Decomposition of EOPs and orthogonality}\label{sec:Decomposition} \hfill

    In Subsection \ref{sec:TotalSymmetry}, we proved that when $W$ is an even weight and $a$ is a half-period, the sequence of $a$-EOPs splits into two sequences of orthogonal polynomials on the real-line given in terms of the weight $w(x) = \frac{W(z)}{|\wp'(z)|}$ where $x = \wp(z)$. We now drop the symmetries assumed on $W$ and $a$.
    
   We start with a general decomposition lemma holding for for any $a$-polynomial and show that, in the case of $F_n$, the polynomials involved satisfy certain orthogonality conditions.

    \begin{lemma}\label{lemma:Decomposition}
        Let $f\in \mathcal L (n\cdot 0+a)$ be an elliptic $a$-polynomial of polynomial degree exactly $n$. There exist unique algebraic polynomials $p_{n,1}$ and $p_{n,2}$, with $
            \deg p_{n,1} \leq \floor*{\frac{n}{2}}$,
            $\deg p_{n,2} \leq \floor*{\frac{n-1}{2}}
        $, such that $f$ can be written in the form %
\begin{equation}\label{eq:Decomposition_of_f_lemma}
            f(z) = p_{n,1}(\wp(z)) + b_1(z) \, p_{n,2}(\wp(z)) + \frac{\wp'(a)}{2} \, p_{n,3}(\wp(z)), 
            \quad p_{n,3}(z) \defeq \frac{p_{2}(z) - p_{2}(\wp(a))}{x - \wp(a)}.
        \end{equation}
    \end{lemma}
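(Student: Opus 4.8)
The guiding idea is to split $f$ according to its parity under the elliptic involution $z\mapsto -z$. Writing $f=f_++f_-$ with $f_\pm(z)=\tfrac12\big(f(z)\pm f(-z)\big)$, the part $f_+$ is an even elliptic function, hence a rational function of $\wp$, while $f_-$ is odd, so $f_-/\wp'$ is even and again a rational function of $\wp$. The pole data of $f$ (order $\le n$ at $0$, at most simple at $a$) then forces $f_+=p_{n,1}(\wp)+\tfrac{c}{\wp-\wp(a)}$ and $f_-=-\tfrac12\,\wp'\,\frac{p_{n,2}(\wp)}{\wp-\wp(a)}$ for polynomials $p_{n,1},p_{n,2}$ and a constant $c$; comparing with the closed form $b_1=-\tfrac12\frac{\wp'+\wp'(a)}{\wp-\wp(a)}$ from \eqref{defB1} is what suggests the precise combination in \eqref{eq:Decomposition_of_f_lemma}. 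This picture makes the statement plausible, but I would organize the actual proof as a dimension count, which delivers existence and uniqueness simultaneously.

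The plan is as follows. By Definition~\ref{def:21} the space $\mathcal L(n\cdot 0+a)$ is spanned by $\{b_0,\dots,b_n\}$ and has dimension $n+1$, while the set of admissible pairs $(p_{n,1},p_{n,2})$ has dimension $(\floor*{n/2}+1)+(\floor*{(n-1)/2}+1)=n+1$, using $\floor*{n/2}+\floor*{(n-1)/2}=n-1$. Hence it suffices to show that the linear map $\Psi$ sending $(p_{n,1},p_{n,2})$ to the right-hand side of \eqref{eq:Decomposition_of_f_lemma} takes admissible pairs into $\mathcal L(n\cdot 0+a)$ and is injective; an injective linear map between spaces of equal finite dimension is onto, which is exactly the existence and uniqueness asserted.

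To see that $\Psi$ lands in the right space, I would substitute $b_1$ and collapse the last two terms of \eqref{eq:Decomposition_of_f_lemma} via $p_{n,3}$, obtaining, with $A\defeq\wp(a)$, the compact form
\[
\Psi(p_{n,1},p_{n,2})=p_{n,1}(\wp)-\tfrac12\,\frac{\wp'\,p_{n,2}(\wp)}{\wp-A}-\frac{\wp'(a)\,p_{n,2}(A)}{2\,(\wp-A)}.
\]
The pole count at $0$ is then routine from $\wp\sim z^{-2}$, $\wp'\sim-2z^{-3}$: the degree bounds give orders $2\floor*{n/2}\le n$ and $1+2\floor*{(n-1)/2}\le n$ for the first two terms, and the third is regular at $0$; at $a$ the two fractions add up to a simple pole. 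The one genuinely delicate point — and the reason the corrector $\tfrac{\wp'(a)}2 p_{n,3}(\wp)$ is present — is the behavior at $-a$: using $\wp(-a)=A$ and $\wp'(-a)=-\wp'(a)$, the residues of the two fractional terms at $-a$ are $-\tfrac12 p_{n,2}(A)$ and $+\tfrac12 p_{n,2}(A)$, so they cancel and $\Psi(p_{n,1},p_{n,2})$ has no pole at $-a$, placing it in $\mathcal L(n\cdot 0+a)$. Injectivity is then immediate from parity: the odd part $-\tfrac12\,\wp'\,p_{n,2}(\wp)/(\wp-A)$ vanishes only if $p_{n,2}\equiv 0$, after which the even part $p_{n,1}(\wp)$ forces $p_{n,1}\equiv 0$.

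I expect the main obstacle to be the degenerate half-period case $\wp'(a)=0$, which is exactly the one used in Section~\ref{sec:TotalSymmetry} with $a=\omega_1$. There $a\equiv-a$ on $\TT$, the corrector term vanishes identically, and $\wp-A=\wp-e_j$ has a double zero at $a$, so the fraction $\wp'\,p_{n,2}(\wp)/(\wp-A)$ must be re-examined; I would check that the simple zero of $\wp'$ at the half-period reduces the apparent double pole to a simple one, so that the pole bookkeeping, the bound $\deg p_{n,2}\le\floor*{(n-1)/2}$, and hence the dimension count all persist unchanged.
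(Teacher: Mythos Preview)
Your proof is correct, and it is a genuinely different route from the paper's. The paper argues constructively: it expands $f=\sum_{j=0}^n\lambda_j b_j$ in the canonical basis \eqref{defB1}--\eqref{basis1}, invokes the identity $\wp'(z)=-2\,b_1(z)[\wp(z)-\wp(a)]-\wp'(a)$ (a rewriting of the second formula in \eqref{defB1}) to replace each $b_{2k+1}=-\tfrac12\wp'\wp^{k-1}$, and then reads off explicit polynomial formulas for $p_{n,1},p_{n,2},p_{n,3}$ directly in terms of the coefficients $\lambda_j$, treating the parities of $n$ separately. Uniqueness is left implicit, being a consequence of the fact that the $\lambda_j$'s are uniquely determined by $f$.

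Your approach trades the explicit formulas for a cleaner structural argument: the dimension count plus injectivity handles existence and uniqueness simultaneously, with no need to split into even and odd $n$, and your residue computation at $-a$ makes transparent \emph{why} the corrector $\tfrac{\wp'(a)}{2}p_{n,3}(\wp)$ is exactly what is needed (a point the paper's rewriting obscures). The paper's approach, on the other hand, yields the closed forms $p_{n,1}(x)=\sum_j\lambda_{2j}x^j$, $p_{n,2}(x)=\lambda_1+\sum_j\lambda_{2j+3}(x-\wp(a))x^j$, which, while not strictly needed later, make the link to the basis $\{b_j\}$ completely explicit. Your treatment of the half-period degeneration $\wp'(a)=0$ is also correct and is not discussed in the paper's proof.
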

    \begin{proof}
    Let $n = 2m$ and write  $f(z) = \sum_{j=0}^n \lambda_{j} b_j(z)$, $\lambda_n \neq 0$. Using  that the second expression for the function $b_1$ in \eqref{defB1} is equivalent to the identity
    \begin{equation*}
        \wp'(z) = -2 \, b_1(z) [ \wp(z) - \wp(a)] - \wp'(a)
    \end{equation*}
     and the definition of $b_j$ in \eqref{basis1}, one gets \eqref{eq:Decomposition_of_f_lemma} with 
    \[
        p_{n,1}(x) = \sum_{j=0}^{m} \lambda_{2j} x^j, \quad p_{n,2}(x) = \lambda_1 + \sum_{j=0}^{m-2} \lambda_{2j + 3} [x - \wp(a)] x^j, \quad p_{n,3}(x) = \sum_{j=0}^{m-2} \lambda_{2j + 3} x^j.
    \]
    The proof for  $n = 2m+1$ is analogous.
    \end{proof}

 Recall that the Weierstrass $\wp$ function maps $\gamma_2$ twice on the interval $[e_3,e_2]$; more precisely, $\wp(\omega_3 + 2 t )$ is a strictly increasing function on $[0,1]$, mapping $[0,1]$ onto $[e_3, e_2]$. It is strictly decreasing on $[1,2]$, taking all values from $e_2$ to $e_3$ as $t$ transitions from $1$ to $2$. In this subsection, we denote by $z = \wp^{-1}(x)$ the inverse of $\wp^{-1}$ on the first half of the interval $\gamma_2$. In particular, $-z$ equals the inverse of $\wp$ restricted to the second half of $\gamma_2$. Moreover,
    \begin{alignat}{2}\label{eq:pprime_for_bj}
        b_1(z) &= - \frac{ \sqrt{\prod_{i=1}^3 (x-e_i)} + \frac{\wp'(a)}{2}}{x - \wp(a)}, \quad 
        &&b_1(-z) =  \frac{ \sqrt{\prod_{i=1}^3 (x-e_i)} - \frac{\wp'(a)}{2}}{x - \wp(a)}, \\
        b_{2l}(z) &= x^l , \quad 
        &&b_{2l+3}(z) = - x^l \sqrt{\prod_{i=1}^3 (x-e_i)}.
    \end{alignat}

 Let $F_n$ be the $n$-th monic elliptic orthogonal $a$-polynomial with respect to a general weight $W$ on $\gamma_2$ and $a\in \gamma_1$. We preserve the notation of $p_{n,1}$ and $p_{n,2}$ when applying Lemma \ref{lemma:Decomposition} to $F_n$. Additionally, define
  \begin{equation}\label{eq:defqn}
        q_n(x) \defeq p_{n,1}(x)( \wp(a) - x) + \frac{\wp'(a)}{2} p_{n,2}(\wp(a)),
    \end{equation}
as well as, for $j\in \{-1,0,1\}$,   the following real valued weight functions,
   \begin{equation}\label{defweights}
    w_{j}^{\pm}(x) \defeq \left(  \sqrt{\prod_{l=1}^3 (x-e_l)}\right)^j \frac{1}{\wp(a) - x} \frac{W(\wp^{-1}(x)) \pm W(-\wp^{-1}(x))   }{2}, \quad x \in [e_3,e_2].   
   \end{equation}
\begin{thm}\label{teo:General_Decomposition_f}
For $n\in \Z_{\ge 0}$, let $m=\floor*{\frac{n}{2}}$ and $k=n-2m\in \{0,1\}$.       With the notation above,
        the polynomials $p_{n,2}$ and $q_n$ satisfy the orthogonality conditions
\begin{equation}\label{eq:t1_mop_conditions_1}
        \int_{e_3}^{e_2} \left[ q_n(x)  w_{-1}^+(x) +  p_{n,2}(x)  w_0^-(x) \right] x^l \, \dd x = 0, \quad 0 \leq l < m + \frac{k}{2};
    \end{equation}
    \begin{equation}\label{eq:t1_mop_conditions_2}
        \begin{split}
            \int_{e_3}^{e_2} \left[ q_n(x)    w_0^-(x) + p_{n,2}(x)  w_1^+(x) \right] x^l  \, \dd x = 0, \quad 0 \leq l < m-2;
        \end{split}
    \end{equation}
    \begin{equation}\label{eq:t1_mop_conditions_3}
        \begin{split}
            &\int_{e_3}^{e_2} \left[ q_n(x) \left(  w_0^-(x)  + \frac{\wp'(a)}{2}  w_{-1}^+(x) \right) + p_{n,2}(x) \left( w_1^+(x)  + \frac{\wp'(a)}{2}  w_0^-(x)  \right) \right] \, \frac{\dd x}{\wp(a) - x} = 0.
            \end{split}
    \end{equation}
    \end{thm}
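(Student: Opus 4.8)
The plan is to collapse the three-term decomposition of Lemma~\ref{lemma:Decomposition} into a single rational expression, and then to push the orthogonality relations $\int_{\gamma_2}F_n\,b_j\,W\,\dd z=0$, $0\le j\le n-1$, down to $[e_3,e_2]$ through the Weierstrass parametrization, matching each basis function $b_j$ to one of the three stated identities.

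Write $\alpha:=\wp(a)$ and $\beta:=\wp'(a)$. Using the second form of $b_1$ in \eqref{defB1}, namely $b_1(z)=-\tfrac12(\wp'(z)+\beta)/(\wp(z)-\alpha)$, together with $p_{n,3}(x)=(p_{n,2}(x)-p_{n,2}(\alpha))/(x-\alpha)$, the last two summands of the decomposition telescope:
\[
b_1(z)\,p_{n,2}(\wp(z))+\frac{\beta}{2}\,p_{n,3}(\wp(z))=-\frac{\wp'(z)\,p_{n,2}(\wp(z))+\beta\,p_{n,2}(\alpha)}{2\,(\wp(z)-\alpha)}.
\]
Adding $p_{n,1}(\wp(z))$, multiplying by $\alpha-\wp(z)$, and invoking the definition \eqref{eq:defqn} of $q_n$ gives the compact form
\[
\bigl(\wp(a)-\wp(z)\bigr)\,F_n(z)=q_n(\wp(z))+\frac{\wp'(z)}{2}\,p_{n,2}(\wp(z)),
\]
which isolates the two polynomial pieces $q_n$ and $p_{n,2}$.

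Next I would set up the two-sheeted change of variables. Since $\wp$ maps each half of $\gamma_2$ bijectively and monotonically onto $[e_3,e_2]$, with $\wp'=+2\rho$ on the first half ($z=\wp^{-1}(x)$) and $\wp'=-2\rho$ on the second ($z=-\wp^{-1}(x)$), where $\rho(x):=\sqrt{\prod_{i=1}^3(x-e_i)}$, any contour integral splits as
\[
\int_{\gamma_2}g(z)\,\dd z=\int_{e_3}^{e_2}\bigl[g(\wp^{-1}(x))+g(-\wp^{-1}(x))\bigr]\,\frac{\dd x}{2\rho(x)}.
\]
Setting $W^{\pm}(x):=\tfrac12\bigl(W(\wp^{-1}(x))\pm W(-\wp^{-1}(x))\bigr)$, so that $w^{+}_{j}=\rho^{j}W^{+}/(\alpha-x)$ and $w^{-}_{j}=\rho^{j}W^{-}/(\alpha-x)$, I would apply this with $g=F_n\,b_j\,W$, insert the compact form of $F_n$ and the sheet values of $b_j$ from \eqref{eq:pprime_for_bj}, and exploit the parity of each ingredient under $z\mapsto-z$ ($\wp$ even, $\wp'$ odd). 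For the even generators $b_{2l}=x^l$ the two sheets share the factor $x^l/(\alpha-x)$ and their weight combination reduces to $q_nW^{+}+\rho\,p_{n,2}W^{-}$, which after division by $2\rho$ reproduces \eqref{eq:t1_mop_conditions_1} in the admissible range $2l\le n-1$. For the odd generators $b_{2l+3}=\mp\,x^{l}\rho$ the sign flip between sheets interchanges $W^{+}$ and $W^{-}$, so the combination becomes $q_nW^{-}+\rho\,p_{n,2}W^{+}$, giving \eqref{eq:t1_mop_conditions_2} for $2l+3\le n-1$. The distinguished generator $b_1$ is the only basis function with a pole at $a$ and so carries both an even part $\beta/(2(\alpha-x))$ and an odd part $\rho/(\alpha-x)$; expanding its product with $F_n$ over the two sheets and dividing by $2\rho(\alpha-x)$ yields exactly the mixed integrand of \eqref{eq:t1_mop_conditions_3}. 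A parameter count then confirms that these three families exhaust the $n$ conditions $0\le j\le n-1$.

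The work here is bookkeeping rather than conceptual. The single genuinely delicate computation is the $b_1$ case, where the even and odd parts of $b_1$ mix with those of $F_n$ and one must verify that the four resulting products collapse to the symmetric combination in \eqref{eq:t1_mop_conditions_3}; elsewhere the only hazards are the sign of $\wp'=\pm2\rho$ on the two sheets and the $(\alpha-x)$ denominators descending from both $F_n$ and $b_1$. The index ranges are forced by the membership $b_j\in\mathcal L((n-1)\cdot0+a)$, i.e. $j\le n-1$, which for the odd family \eqref{eq:t1_mop_conditions_2} amounts to $0\le l\le m-2$.
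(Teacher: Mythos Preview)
Your proposal is correct and follows essentially the same route as the paper: split the contour integral over $\gamma_2$ into two sheets via $x=\wp(z)$, insert the decomposition of $F_n$, and read off one identity for each of the three types of basis functions $b_{2l}$, $b_{2l+3}$, $b_1$. The one difference is that you first collapse the decomposition to the compact form $(\wp(a)-\wp(z))F_n(z)=q_n(\wp(z))+\tfrac{\wp'(z)}{2}p_{n,2}(\wp(z))$ and only then change variables, whereas the paper keeps $p_{n,1},p_{n,2},p_{n,3}$ separate through the integration and assembles $q_n$ afterwards; your ordering makes the bookkeeping noticeably cleaner, especially in the $b_1$ case, but the underlying argument is the same.
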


    \begin{remark}
 The results of Theorem~\ref{teo:General_Decomposition_f} hold even in the general case of $a \in \TT \setminus \left\{ \gamma_1 \cup \gamma_2\right\}$, when the weights $w_j^{\pm}$ are generally complex-valued. 
      
        Equations ~\eqref{eq:t1_mop_conditions_1}-\eqref{eq:t1_mop_conditions_3} define \textit{type II multiple orthogonality} conditions satisfied by  the vector of polynomials $(q_n, p_{n,2})$, see, e.g., \cite[$\mathsection 23$]{Ismail2005}.
    \end{remark}
    \begin{proof}
    The change of variable $x = \wp(z)$ and the definitions \eqref{eq:pprime_for_bj} give us
\begin{equation}\label{eq:orthogonality_for_deformation}
        \begin{split}
            0 
                &= \int_\gamma f_n(z) b_j(z) W(z) \, \dd z = \int_{\omega_3}^{\omega_3 + \omega_1}f_n(z) b_j(z) W(z) \, \dd z + \int_{\omega_3 + \omega_1}^{\omega_3 + 2 \omega_1} f_n(z) b_j(z) W(z) \, \dd z\\
                &= \int_{e_3}^{e_2} \left[p_{n,1}(x) + \frac{\wp'(a)}{2} p_{3,n}(x) \right] \left[\frac{b_j(z) W(z) + b_j(-z) W(-z)}{2 \sqrt{ \prod_{i=1}^3 (x-e_i)}} \right] \, \dd x \\
                &\phantom{=} + \int_{e_3}^{e_2} p_{n,2}(x) \left[ \frac{b_1(z) b_j(z) W(z) + b_1(-z) b_j(-z) W(-z)}{2 \sqrt{\prod_{i=1}^3 (x-e_i)}}\right] \, \dd x.
        \end{split}
    \end{equation}

Using again \eqref{eq:pprime_for_bj} with $j = 2l < n$, we get
        \begin{multline*}
            \int_{e_3}^{e_2} \left[p_{n,1}(x) + \frac{\wp'(a)}{2} p_{3,n}(x) \right] x^l \, [\wp(a) - x]w_{-1}^+(x) \, \dd x 
                = - \int_{e_3}^{e_2} p_{n,2}(x) x^l \left[ w_0^-(x) + \frac{\wp'(a)}{2} w_{-1}^+(x)  \right] \, \dd x.
        \end{multline*}
        
    By the definition of $q_n$ \eqref{eq:defqn} and the fact that $p_{3,n}(x) = \frac{  p_{n,2}(\wp(a)) - p_{n,2}(x)}{ \wp(a) - x}$,  we can rewrite the above as
    \[
        \int_{e_3}^{e_2} q_n(x) x^l w_{-1}^+(x) = - \int_{e_3}^{e_2} p_{n,2}(x) x^l w_0^-(x), \quad 0 \leq 2l < n,
    \]
    which is exactly \eqref{eq:t1_mop_conditions_1}.

   On the other hand, \eqref{eq:pprime_for_bj}  with $j = 2l + 3 < n$ shows that \eqref{eq:orthogonality_for_deformation} takes the form
    \begin{multline*}
        -\int_{e_3}^{e_2} \left[p_{n,1}(x) + \frac{\wp'(a)}{2} p_{3,n}(x) \right] x^l [\wp(a) - x] w_0^-(x) \, \dd x =
            \int_{e_3}^{e_2} p_{n,2}(x) x^l \left[ w_1^+(x) + \frac{\wp'(a)}{2} w_0^-(x) \right] \, \dd x.
    \end{multline*}

    Similarly to the previous case, we rewrite the identity above as
    \begin{equation*}
        \begin{split}
            \int_{e_3}^{e_2} q_n(x)  x^l  w_0^-(x) \, \dd x 
                &=  - \int_{e_3}^{e_2} p_{n,2}(x) x^l w_1^+(x)  \, \dd x, \quad 0 \leq 2l+3 < n,
        \end{split}
    \end{equation*}
    which is \eqref{eq:t1_mop_conditions_2}.
    
    Finally, for $j=1$, \eqref{eq:orthogonality_for_deformation} becomes
    \[
    \begin{split}
        &\int_{e_3}^{e_2} \left(p_{n,1}(x) + \frac{\wp'(a)}{2} p_{3,n}(x) \right) \left[ w_0^-(x) + \frac{\wp'(a)}{2} w_{-1}^+(x) \right] \, \dd x \\
        &= - \int_{e_3}^{e_2} \frac{p_{n,2}(x)}{\wp(a) - x} \left[ w_1^+(x) + \wp'(a) w_0^-(x) + \left( \frac{\wp'(a)}{2} \right)^2 w_{-1}^+(x) \right] \, \dd x.
    \end{split}
    \]
 This is equivalent to 
    \[
        \begin{split}
            &\int_{e_3}^{e_2} q_n(x) \left[ \frac{w_0^-(x)}{\wp(a) - x} + \frac{\wp'(a)}{2} \frac{w_{-1}^+(x)}{\wp(a) - x}\right] = - \int p_{n,2}(x) \left[ \frac{w_1^+(x)}{\wp(a) - x} + \frac{\wp'(a)}{2} \frac{w_0^-(x)}{\wp(a) - x}  \right] \, \dd x
        \end{split}
    \]
    that is, equation \eqref{eq:t1_mop_conditions_3}. This finishes the proof of Theorem \ref{teo:General_Decomposition_f}.
    \end{proof}
    
    Notice that when $W$ is even,  functions $w_j^{-}\equiv 0$, and equations \eqref{eq:t1_mop_conditions_1}-\eqref{eq:t1_mop_conditions_3} specialize to
\begin{align}
    \label{eq:Conditions_p1n_Weven}
        \int_{e_3}^{e_2} q_n(x) x^l  w_{-1}^+(x)  \, \dd x & = 0, \quad 0 \leq l < m + \frac{k}{2},
    \\
    \label{eq:Conditions_p2n_Weven}
        \int_{e_3}^{e_2} p_{n,2}(x) x^l w_1^{+}(x) \, \dd x &= 0, \quad 0 \leq l \leq m-2,
    \\
    \label{eq:ExtraCondition_Weven}
        \int_{e_3}^{e_2} \left[ q_n(x)  \frac{\wp'(a)}{2} \frac{w_{-1}^+(x)}{\wp(a) - x}
        + p_{n,2}(x) \frac{w_1^+(x)}{\wp(a) - x}  \right] \, \dd x &= 0.
    \end{align}

    Equation \eqref{eq:Conditions_p1n_Weven} defines orthogonality of $q_n$ with respect to $w_{-1}^+(x)$. By Lemma \ref{lemma:Decomposition},
    \begin{itemize}
        \item if $n=2m$ then $q_n$ has degree $m+1$ and orthogonality conditions in \eqref{eq:Conditions_p1n_Weven} are up to degree $m-1$. In other words, $q_n$ is quasi-orthogonal with respect to $w_{-1}^+(x)$.
        \item if $n = 2m+1$, then $q_n$ has degree $\leq m+1$ and \eqref{eq:Conditions_p1n_Weven} corresponds to orthogonality up to degree $m$. Now, if the degree is exactly $m+1$, then $q_n$ is orthogonal with respect to $w_{-1}^+(x)$. Otherwise, $q_n \equiv 0$.
    \end{itemize}
    
   Similarly, equation \eqref{eq:Conditions_p2n_Weven} means that,
    \begin{itemize}
        \item when $n=2m+1$, $p_{n,2}$ has degree $m$. Thus, $p_{n,2}$ is quasi-orthogonal with respect to $w_1^{(a)}$;
        \item when $n = 2m$,  $p_{n,2}$ has degree $\leq m-1$. If it is exactly $m-1$, then $p_{n,2}$ is orthogonal with respect to $w_1^{(a)}$, otherwise it must be identically equal to $0$.
    \end{itemize}
In summary,
\begin{cor}\label{teo:EvenExpressionsFor_qn_p2n}
        Let $W$ be an even weight on $\gamma_2$. For each $n\in \Z_{\ge 0}$, $m=\floor*{\frac{n}{2}}$ and $k=n-2m\in \{0,1\}$, there exists constants $\kappa_n, \nu_n \in \C$ such that
        \[
        q_n(x) = - \delta_{0,k} P_{m+1}(x,w_{-1}^+) + \kappa_n P_{m+k}(x,w_{-1}^+), \quad 
       p_{n,2}(x) = \delta_{1,k} P_m(x,w_1^+) + \nu_n P_{m-1}(x,w_1^+),
        \]
        where $q_n$ and $p_{n,2}$ are given by Lemma \ref{teo:General_Decomposition_f} and equation \eqref{eq:defqn}, and $\delta_{i,k}$ is the Kronecker delta. 
    \end{cor}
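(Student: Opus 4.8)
The plan is to obtain the statement by reading the quasi-orthogonality relations \eqref{eq:Conditions_p1n_Weven} and \eqref{eq:Conditions_p2n_Weven} through the classical representation of quasi-orthogonal polynomials: a polynomial of degree $N$ that is orthogonal to every polynomial of degree $\le N-2$ with respect to a weight is a linear combination of the two consecutive monic orthogonal polynomials $P_N$ and $P_{N-1}$ for that weight. Concretely, I would expand $q_n = \sum_{j\ge 0} a_j P_j(\cdot, w_{-1}^+)$ and use that $\{P_0, \dots, P_\ell\}$ spans the polynomials of degree $\le \ell$, so that the conditions \eqref{eq:Conditions_p1n_Weven} are equivalent to the vanishing of $a_0, a_1, \dots$ up to the relevant index. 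For $k=0$ these kill $a_0, \dots, a_{m-1}$ and leave $a_{m+1}P_{m+1} + a_m P_m$; for $k=1$ they kill $a_0, \dots, a_m$ and leave $a_{m+1}P_{m+1}$. The identical argument applied to $p_{n,2}$ against $w_1^+$ via \eqref{eq:Conditions_p2n_Weven} leaves only the coefficients of $P_m(\cdot, w_1^+)$ and $P_{m-1}(\cdot, w_1^+)$.

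Next I would pin down the top coefficients by a degree count based on Lemma~\ref{lemma:Decomposition} and the monic normalization of $F_n$. Tracing the pole of order $n$ at $0$ through the decomposition shows that $p_{n,1}$ is monic of degree $m$ when $n=2m$, while $p_{n,2}$ is monic of degree $m$ when $n=2m+1$ (the $b_1(z)p_{n,2}(\wp(z))$ term supplies the odd-order pole). Then the definition \eqref{eq:defqn} gives $q_n(x) = p_{n,1}(x)(\wp(a)-x) + \mathrm{const}$, whose leading term is $-x\cdot x^m = -x^{m+1}$ in the even case; hence $q_n$ has degree $m+1$ with leading coefficient $-1$, forcing the coefficient of $P_{m+1}(\cdot, w_{-1}^+)$ to equal $-1$. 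This is exactly the factor $-\delta_{0,k}$, and the monicity of $p_{n,2}$ in the odd case produces the factor $\delta_{1,k}$. The remaining free coefficients are then named $\kappa_n$ and $\nu_n$.

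Finally I would dispose of the degenerate subcases in which $q_n$ or $p_{n,2}$ does not attain its maximal admissible degree: there the relevant polynomial is orthogonal to itself, hence vanishes identically, which is consistent with the stated form once $\kappa_n$ or $\nu_n$ is set to $0$. I expect the only delicate point to be the leading-coefficient bookkeeping through Lemma~\ref{lemma:Decomposition} and \eqref{eq:defqn}, in particular verifying that the constant shift $\frac{\wp'(a)}{2}p_{n,2}(\wp(a))$ in \eqref{eq:defqn} does not touch the top-degree term, so that the signs $-\delta_{0,k}$ and $+\delta_{1,k}$ emerge correctly. The third relation \eqref{eq:ExtraCondition_Weven} is not needed for the existence statement of the corollary; it would serve only to couple $\kappa_n$ and $\nu_n$ should one wish to determine them explicitly.
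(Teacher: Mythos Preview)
Your proposal is correct and follows essentially the same route as the paper: the corollary is stated there as a summary of the preceding discussion, which reads off the quasi-orthogonality of $q_n$ and $p_{n,2}$ from \eqref{eq:Conditions_p1n_Weven}--\eqref{eq:Conditions_p2n_Weven}, combines it with the degree bounds from Lemma~\ref{lemma:Decomposition}, and then fixes the leading coefficients via the monic normalization of $F_n$ exactly as you describe. Your remark that \eqref{eq:ExtraCondition_Weven} is not needed for the existence statement and would only serve to couple $\kappa_n$ and $\nu_n$ is also in line with how the paper uses that relation later, in Section~\ref{section:generaldecomp}.
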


    \begin{remark}
        Further assuming that $a = \omega_1$, one can show that $\kappa_n = \nu_n = 0$, and the decomposition from Lemma \ref{lemma:Decomposition} for $F_n$ reduces to the one given by Theorem \ref{teo:lifting_symmetric}.
    \end{remark}

    \subsection{General decomposition of EOPs} \label{section:generaldecomp} \hfill 

 Now we generalize the construction of Section ~\ref{sec:TotalSymmetry}. 
    Our goal here is to lift the family of orthogonal polynomial with respect to a generic weight $w$ defined on an interval $[e_3,e_2]$ to a family of $a$-EOP with respect to an even weight $W$ on $\gamma_2$ for a suitable torus $\TT$, for a general $a \in \gamma_1$. 

    We assume that $e_2, e_3$ satisfy the conditions \eqref{condtone} and define $e_1=-e_2-e_3$, so that 
    \[
        e_3<e_2<e_1.
    \]
    As in Section \ref{sec:TotalSymmetry}, the triplet $e_3,e_2,e_1$ determines uniquely the Weierstrass elliptic function $\wp = \wp(\cdot, \Lambda)$ for a rectangular lattice $\Lambda$, with corresponding half-periods $\omega_1 > 0$ and $\omega_3 = i \tau$ ,$\tau > 0$.

Given a weight function $w(x)$ supported on $[e_3,e_2]$ with finite moments and $a \in \gamma_1$, consider the weight function
    \[
        W(z) \defeq 
            \left(\wp(a) - \wp(z) \right) w \left(\wp(z) \right) \sqrt{\prod_{l=1}^3 \left(\wp(z) - e_l \right)}, \quad z \in \gamma_2.
    \]
 With the notation \eqref{defweights}, 
    \begin{equation}\label{w_hat}
        w_{-1}^+(x) = w(x), \quad w_{1}^+(x) = \prod_{j=1}^3 (x-e_i) w(x) \eqdef \widehat w(x).
    \end{equation}
    Set $n = 2m$. By Lemma \ref{lemma:Decomposition}, the $n$-th $a$-EOP with respect to $W$ is 
    \[
        F_n(z) = p_{n,1}(\wp(z)) + b_1(z) p_{n,2}(\wp(z)) + \frac{\wp'(a)}{2} p_{3,n}(\wp(z)),
    \]
    where $p_{n,1}$ has degree $m$ and $p_{n,2}$ has degree $\leq m-1$ and, for $q_n$ as in \eqref{eq:defqn},
    \[
        q_n(x) = - P_{m+1}(x,w) + \kappa_n P_m(x, w), \quad p_{n,2}(x) = \nu_n P_{m-1}(x, \widehat w), \quad \kappa_n, \nu_n \in \C.
    \]
    Equation \eqref{eq:defqn} implies that
    \[
        q_n(\wp(a)) = \frac{\wp'(a)}{2} p_{n,2}(\wp(a)) = - P_{m+1}(\wp(a),w) + \kappa_n P_m(\wp(a),w),
    \]
    which gives the following expression for $\kappa_n$,
    \[
        \kappa_n = \frac{\wp'(a)p_{n,2}(\wp(a)) +2P_{m+1}(\wp(a),w)}{2 P_m(\wp(a),w)}.
    \]
    Similarly, we find $\nu_n$ by evaluating $p_{2,n}$ at $\wp(a)$:
    \[
        \nu_n = \frac{p_{n,2}(\wp(a))}{P_{m-1}(\wp(a),\widehat w)}.
    \]

    Equation \eqref{eq:t1_mop_conditions_3} becomes
    \begin{multline*}
        \frac{\wp'(a)}{2} \left( \left[ \frac{\wp'(a)p_{n,2}(\wp(a)) +2P_{m+1}(\wp(a),w)}{2 P_m(\wp(a),w)} \right]\int_{e_3}^{e_2}    \frac{P_m(x,w)w(x)}{\wp(a) - x} \, \dd x 
        \right.\\
        \left. - \int_{e_3}^{e_2}  \frac{P_{m+1}(x,w)w(x)}{\wp(a) - x} \, \dd x \right)+ 
        \frac{p_{n,2}(\wp(a))}{P_{m-1}(\wp(a),\widehat w)} \int_{e_3}^{e_2} \frac{ P_{m-1}(x, \widehat w)\widehat w(x)}{\wp(a) - x} \, \dd x = 0.
    \end{multline*}
    Solving for $p_{n,2}(\wp(a))$ in the expression above we get
    \[
        p_{n,2}(\wp(a)) 
        = \frac{\wp'(a)}{2}
        \left[ T_{m-1}(a) -  \frac{P_{m+1}(\wp(a),w)T_m(a)}{P_m(\wp(a),w)}  \right]
        \left[ \frac{\wp'(a)^2T_m(a)}{4 P_m(\wp(a),w)} 
        + \frac{\widehat T_{m-1}(a)}{P_{m-1}(\wp(a), \widehat w)}
        \right]^{-1} ,
    \]
    where   
    \begin{equation}\label{eq_Rn}
        T_n(a) \defeq \int_{e_3}^{e_2} \frac{P_n(x,w) w(x)}{\wp(a) - x} \, \dd x, \quad 
        \widehat T_n(a) \defeq \int_{e_3}^{e_2} \frac{P_n(x,\widehat w) \widehat w(x)}{\wp(a) - x} \, \dd x.
    \end{equation}
    
   This formula, although cumbersome, is an explicit expression for $p_{n,2}(\wp(a))$ given only in terms of the sequences of monic OPRL with respect to $w$ and $\widehat w$, and their Cauchy transforms evaluated at $\wp(a)$.
    
    The case $n = 2m+1$ is similar: 
    \[
        q_n(x) = \kappa_n P_{m+1} (x,w), \quad p_{n,2}
        (x) = P_m(x, \widehat w) + \nu_n P_{m-1} (x, \widehat w),
    \]
    with
    \[
        \kappa_n = \frac{\wp'(a) p_{n,2}(\wp(a))}{4 P_{m+1} (\wp(a), w)}, \quad \nu_n = \frac{p_{n,2}(\wp(a)) - P_m(\wp(a), \widehat w)}{P_{m-1} (\wp(a), \widehat w)},
    \]
    where
    \[
        p_{n,2}(\wp(a)) = \left[ \frac{P_m(\wp(a),\widehat w)}{P_{m-1}(\wp(a), \widehat w)} \widehat T_{m-1}(a) - \widehat T_m(a) \right] \left[ \frac{\wp'(a)^2}{4} \frac{T_{m+1}(a)}{ P_{m+1}(\wp(a), w)} + \frac{\widehat T_{m-1}(a)}{P_{m-1}(\wp(a), \widehat w)} \right]^{-1}.
    \]
    
    The following theorem summarizes the results of this subsection; it is convenient to remind the reader that $P_n(\cdot, w)$ denotes the $n$-th monic orthogonal polynomial with respect to the weight $w(x) \, \dd x$. 
    \begin{thm}
        Let $\TT$ be a torus with Weierstrass invariants $e_3 < e_2 < e_1$, $\wp(z)$ its corresponding Weierstrass elliptic function, $w$ a positive weight on $[e_3,e_2]$ with finite moments.
             The function
        \[
            W(z) = (\wp(a) - \wp(z)) w(\wp(z)) \sqrt{\prod_{l=1}^3 (\wp(z) - e_l)}, \quad z \in \gamma_2,
        \]
        defines a positive weight on $\gamma_2$.
        
        Let $n\in \Z_{\ge 0}$, $m=\floor*{\frac{n}{2}}$, and $k=n-2m\in \{0,1\}$. Then  the corresponding  $n$-th monic orthogonal $a$-polynomial is  
        \[
            F_n(z) = p_{n,1}(\wp(z)) + b_1(z) p_{n,2}(\wp(z)) + \frac{\wp'(a)}{2}p_{n,3}(\wp(z)),
        \]
 where 
        \begin{align*}
             p_{n,1}(x) &\defeq \frac{ - \delta_{0,k} P_{m+1}(x, w) + \kappa_n P_{m+k}(x, w) - \frac{\wp'(a)}{2} c_n}{  \wp(a) - x}, \\
             p_{n,2}(x) &\defeq \delta_{1,k} P_m(x, \widehat w) + \nu_n P_{m-1}( x, \widehat w), \\
             p_{n,3}(x) &\defeq \frac{p_{n,2}(x) - p_{n,2}(\wp(a))}{x - \wp(a)},  
        \end{align*}
$\widehat w$ is given by \eqref{w_hat}, and  $T_m(a)$ and $\widehat T_m(a)$ as in \eqref{eq_Rn}.

The constants above are
        \[
            c_n = 
            \begin{dcases}
               \frac{\wp'(a)}{2}
        \left[ T_{m-1}(a) -  \frac{P_{m+1}(\wp(a),w)T_m(a)}{P_m(\wp(a),w)}  \right]\left[ \frac{\wp'(a)^2T_m(a)}{4 P_m(\wp(a),w)} 
        + \frac{\widehat T_{m-1}(a)}{P_{m-1}(\wp(a), \widehat w)}
        \right]^{-1}  , &k = 0, \\
               \left[ \frac{P_m(\wp(a),\widehat w)}{P_{m-1}(\wp(a), \widehat w)} \widehat T_{m-1}(a) - \widehat T_m(a) \right] \left[ \frac{\wp'(a)^2}{4} \frac{T_{m+1}(a)}{ P_{m+1}(\wp(a), w)} + \frac{\widehat T_{m-1}(a)}{P_{m-1}(\wp(a), \widehat w)} \right]^{-1} , &k = 1;
            \end{dcases}
        \]
        \[
        \kappa_n = \frac{\wp'(a) c_n + 2 \delta_{k,1} P_{m+1}(\wp(a), w)}{P_{m+k}(\wp(a),w)}; \quad  
        \nu_n = \frac{c_n - \delta_{0,k} P_m (\wp(a), \widehat w)} {P_{m-1}(\wp(a),\widehat w)}.
        \]
    \end{thm}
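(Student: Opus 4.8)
The plan is to prove the theorem by \emph{assembling} the three structural results of this subsection rather than re-deriving anything from scratch: Lemma~\ref{lemma:Decomposition} supplies the shape of $F_n$, Theorem~\ref{teo:General_Decomposition_f} supplies the orthogonality satisfied by the polynomial pieces, and Corollary~\ref{teo:EvenExpressionsFor_qn_p2n} identifies those pieces with monic OPRL once $W$ is known to be even. So the first step is to verify the two claims about $W$. Since $a\in\gamma_1$, the analysis preceding Theorem~\ref{teo:lifting_symmetric} gives $\wp(a)\ge e_1$, whereas $\wp(z)\le e_2<e_1$ for $z\in\gamma_2$; hence $\wp(a)-\wp(z)>0$. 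On $[e_3,e_2]$ the product $\prod_{l}(x-e_l)$ is nonnegative, and by \eqref{eq:wpODE} its square root equals $\tfrac12|\wp'(z)|$, so $W>0$ on $\gamma_2$. Because $\wp$ is even and $\wp'$ is odd, each of $\wp(a)-\wp(z)$, $w(\wp(z))$ and $|\wp'(z)|$ is invariant under $z\mapsto -z$, so $W$ is even; this is precisely the hypothesis that lets me pass from Theorem~\ref{teo:General_Decomposition_f} to its even specialization.

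Next I would feed this $W$ into the general machinery. Writing $F_n$ in the form \eqref{eq:Decomposition_of_f_lemma} via Lemma~\ref{lemma:Decomposition} fixes the three unknown polynomials $p_{n,1},p_{n,2},p_{n,3}$ with the stated degree bounds, and $q_n$ is then defined by \eqref{eq:defqn}. A direct substitution of the explicit $W$ into \eqref{defweights} collapses the weights to $w_{-1}^+=w$ and $w_1^+=\widehat w$ (this is exactly \eqref{w_hat}), while evenness kills all the $w_j^-$. Consequently the full orthogonality of Theorem~\ref{teo:General_Decomposition_f} reduces to \eqref{eq:Conditions_p1n_Weven}--\eqref{eq:ExtraCondition_Weven}, and Corollary~\ref{teo:EvenExpressionsFor_qn_p2n} lets me write $q_n$ and $p_{n,2}$ as the displayed combinations of $P_{m+k}(\cdot,w)$, $P_{m+1}(\cdot,w)$, $P_m(\cdot,\widehat w)$ and $P_{m-1}(\cdot,\widehat w)$, leaving exactly two scalar unknowns, which I package as $\kappa_n$ and $c_n\defeq p_{n,2}(\wp(a))$ (the parameter $\nu_n$ being recovered from $c_n$ through the defining relation $c_n=p_{n,2}(\wp(a))$ in both parities).

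It then remains to pin down these two constants, which is where the actual computation lives. The first scalar equation is algebraic: evaluating \eqref{eq:defqn} at $x=\wp(a)$ annihilates the $p_{n,1}$-term and yields $q_n(\wp(a))=\tfrac{\wp'(a)}{2}\,c_n$, which, combined with the leading-term normalizations built into Corollary~\ref{teo:EvenExpressionsFor_qn_p2n}, expresses $\kappa_n$ through $c_n$. The second equation is analytic: it is the single remaining relation \eqref{eq:ExtraCondition_Weven}. Substituting the combinations from Corollary~\ref{teo:EvenExpressionsFor_qn_p2n} and recognizing every integral of the type $\int_{e_3}^{e_2}P_j(x,\cdot)\,(\wp(a)-x)^{-1}(\cdot)\,\dd x$ as a Cauchy transform $T_j(a)$ or $\widehat T_j(a)$ from \eqref{eq_Rn} turns \eqref{eq:ExtraCondition_Weven} into one linear equation for $c_n$. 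Solving it, then back-substituting to obtain $\kappa_n$, $\nu_n$, and finally reconstructing $p_{n,1}$ from \eqref{eq:defqn} and $p_{n,3}$ from its definition in \eqref{eq:Decomposition_of_f_lemma}, reproduces the closed forms in the statement; the two parities $k=0$ and $k=1$ are handled by the same scheme with shifted indices.

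The main obstacle is purely this last bookkeeping step: keeping the degree counts consistent (so that $q_n$ is genuinely quasi-orthogonal of the correct degree and $p_{n,2}$ sits one degree lower), matching the normalizing denominators $P_m(\wp(a),w)$ and $P_{m-1}(\wp(a),\widehat w)$ without sign or index slips, and disentangling the coupling between $\kappa_n$ and $c_n$ so that a single scalar $c_n$ solves \eqref{eq:ExtraCondition_Weven}. Everything else---positivity and evenness of $W$, the structural decomposition, and the identification of $q_n$ and $p_{n,2}$ with OPRL---is inherited directly from the results already established.
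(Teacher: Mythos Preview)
Your proposal is correct and follows essentially the same route as the paper: verify that $W$ is even so that the $w_j^-$ vanish and \eqref{w_hat} holds, invoke Lemma~\ref{lemma:Decomposition} and Corollary~\ref{teo:EvenExpressionsFor_qn_p2n} to reduce to two scalar unknowns, then determine $\kappa_n$ from $q_n(\wp(a))=\tfrac{\wp'(a)}{2}c_n$ and $c_n$ from \eqref{eq:ExtraCondition_Weven} via the Cauchy transforms $T_j,\widehat T_j$. The only cosmetic addition is your explicit check of positivity and evenness of $W$, which the paper leaves implicit.
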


\section*{Acknowledgments}

    The first author declares that this study was financed, in part, by the São Paulo Research Foundation (FAPESP), Brasil; Process Numbers \#2020/13183-0 and \#2022/12756-1. This work started during his stay as a Visiting Scholar at the Department of Mathematics of Baylor University, Texas, USA. He gratefully acknowledges the department’s generous hospitality.
    
    The second author was partially supported by the Simons Foundation Collaboration Grants for Mathematicians (grant MPS-TSM-00710499)).
    He also acknowledges the support of the project PID2021-124472NB-I00, funded by MICIU/AEI/10.13039/501100011033 and by ``ERDF A way of making Europe'', as well as the support of Junta de Andaluc\'{\i}a (research group FQM-229 and Instituto Interuniversitario Carlos I de F\'{\i}sica Te\'orica y Computacional). 

    Both authors thank Prof. Maxim L. Yattselev and Prof. Guilherme Silva for valuable discussions.



\begin{thebibliography}{10}

\bibitem{Bertola2021a}
M.~Bertola.
\newblock Pad\'{e} approximants on {R}iemann surfaces and {KP} tau functions.
\newblock {\em Anal. Math. Phys.}, 11(4):Paper No. 149, 38, 2021.

\bibitem{Bertola2022a}
M.~Bertola.
\newblock Nonlinear steepest descent approach to orthogonality on elliptic
  curves.
\newblock {\em J. Approx. Theory}, 276:Paper No. 105717, 33, 2022.

\bibitem{DesirajuLahiry2025}
H.~Desiraju and S.~Lahiry.
\newblock Recurrence relations and the {C}hristoffel-{D}arboux formula for
  elliptic orthogonal polynomials.
\newblock Preprint {arXiv:}2506.09582, 2025.

\bibitem{Desiraju2024}
H.~Desiraju, T.~L. Latimer, and P.~Roffelsen.
\newblock On a class of elliptic orthogonal polynomials and their
  integrability.
\newblock {\em Constructive Approximation}, 2024.
\newblock 10.1007/s00365-024-09687-z.

\bibitem{DimitrovIsmailRafaeli2013}
D.~K. Dimitrov, M.~E.~H. Ismail, and F.~R. Rafaeli.
\newblock Interlacing of zeros of orthogonal polynomials under modification of
  the measure.
\newblock {\em J. Approx. Theory}, 175:64--76, 2013.

\bibitem{NIST:DLMF}
DLMF: {\it NIST Digital Library of Mathematical Functions}.
\newblock \url{https://dlmf.nist.gov/}, Release 1.2.4 of 2025-03-15.
\newblock F.~W.~J. Olver, A.~B. {Olde Daalhuis}, D.~W. Lozier, B.~I. Schneider,
  R.~F. Boisvert, C.~W. Clark, B.~R. Miller, B.~V. Saunders, H.~S. Cohl, and
  M.~A. McClain, eds.

\bibitem{DriverJordaanMbuyi2008}
K.~Driver, K.~Jordaan, and N.~Mbuyi.
\newblock Interlacing of the zeros of {J}acobi polynomials with different
  parameters.
\newblock {\em Numer. Algorithms}, 49(1-4):143--152, 2008.

\bibitem{FasondiniOlverXi2023a}
M.~Fasondini, S.~Olver, and Y.~Xu.
\newblock Orthogonal polynomials on a class of planar algebraic curves.
\newblock {\em Stud. Appl. Math.}, 151(1):369--405, 2023.

\bibitem{FasondiniOlverXu23b}
M.~Fasondini, S.~Olver, and Y.~Xu.
\newblock Orthogonal polynomials on planar cubic curves.
\newblock {\em Found. Comput. Math.}, 23(1):1--31, 2023.

\bibitem{Ismail2005}
M.~E.~H. Ismail.
\newblock {\em Classical and quantum orthogonal polynomials in one variable},
  volume~98 of {\em Encyclopedia of Mathematics and its Applications}.
\newblock Cambridge University Press, Cambridge, 2009.

\bibitem{Dunham37}
D.~Jackson.
\newblock Orthogonal polynomials on a plane curve.
\newblock {\em Duke Math. J.}, 3(2):228--236, 1937.

\bibitem{Markushevich}
A.~I. Markushevich.
\newblock {\em Theory of functions of a complex variable. {V}ol. {III}}.
\newblock Prentice-Hall, Inc., Englewood Cliffs, NJ, english edition, 1967.

\bibitem{Miranda}
R.~Miranda.
\newblock {\em Algebraic curves and {R}iemann surfaces}, volume~5 of {\em
  Graduate Studies in Mathematics}.
\newblock American Mathematical Society, Providence, RI, 1995.

\bibitem{OlverXu2019}
S.~Olver and Y.~Xu.
\newblock Orthogonal structure on a wedge and on the boundary of a square.
\newblock {\em Found. Comput. Math.}, 19(3):561--589, 2019.

\bibitem{OlverXu2020}
S.~Olver and Y.~Xu.
\newblock Orthogonal polynomials in and on a quadratic surface of revolution.
\newblock {\em Math. Comp.}, 89(326):2847--2865, 2020.

\bibitem{OlverXu2021}
S.~Olver and Y.~Xu.
\newblock Orthogonal structure on a quadratic curve.
\newblock {\em IMA J. Numer. Anal.}, 41(1):206--246, 2021.

\bibitem{SpicerNijhoff11}
P.~E. Spicer, F.~W. Nijhoff, and P.~H. van~der Kamp.
\newblock Higher analogues of the discrete-time {T}oda equation and the
  quotient-difference algorithm.
\newblock {\em Nonlinearity}, 24(8):2229--2263, 2011.

\bibitem{Szego}
G.~Szeg\H{o}.
\newblock {\em Orthogonal {P}olynomials}, volume Vol. 23 of {\em American
  Mathematical Society Colloquium Publications}.
\newblock American Mathematical Society, New York, 1939.

\end{thebibliography}
\end{document}